\documentclass[12pt]{article}
\usepackage{tikz}
\usepackage{float}
\usepackage{hyperref}
\usetikzlibrary{positioning,matrix}
\usepackage[margin=2.5cm]{geometry}
\usepackage[utf8]{inputenc}
\usepackage[T1]{fontenc}
\usepackage{amsmath}
\usepackage{amsfonts}
\usepackage{authblk}
\usepackage{amssymb}
\usepackage{amsthm}
\usepackage{graphicx}
\usepackage{tabularx} 
\usepackage{ragged2e} 
\usepackage{booktabs} 
\newcolumntype{C}{>{\Centering$}X<{$}}
\newtheorem{theorem}{Theorem}[section]     
\newtheorem{corollary}[theorem]{Corollary}
\newtheorem{lemma}[theorem]{Lemma}

\newtheorem{definition}[theorem]{Definition}
\newtheorem{example}[theorem]{Example}

\newtheorem{problem}[theorem]{Problem}

\date{}

\allowdisplaybreaks
\usepackage{fancyhdr}
\fancypagestyle{mypagestyle}{%
	\fancyhf{}
	\fancyhead[OC]{Subarsha Banerjee}
	\fancyhead[EC]{Laplacian Spectra of Comaximal Graph of  $\mathbb{Z}_{n}$}
	\fancyfoot[C]{\thepage}%
}
\pagestyle{mypagestyle}
\title{Laplacian Spectra of Comaximal Graph of $\mathbb{Z}_{n}$}
\author{Subarsha Banerjee}
\affil{Department of Pure Mathematics, University of Calcutta \\ 35 Ballygunge Circular Road, Kol-700019
\\
subarshabnrj@gmail.com
}

\begin{document}
	\maketitle

\begin{abstract}
	
This article focuses on finding the  eigenvalues of the Laplacian matrix of the comaximal graph $\Gamma(\mathbb Z_n)$ of  the  ring $\mathbb Z_n$ for $n> 2$.
We  determine the eigenvalues of $\Gamma(\mathbb Z_n)$ for various $n$ and also provide a  procedure to find the eigenvalues of $\Gamma(\mathbb Z_n)$ for any $n> 2$.
We show that  $\Gamma(\mathbb Z_n)$ is Laplacian Integral for $n=p^\alpha q^\beta$ where $p,q$ are primes and $\alpha, \beta$ are non-negative integers. 
The algebraic and vertex connectivity of $\Gamma(\mathbb Z_n)$ have been shown to be equal for all $n> 2$.
An upper bound on the second largest eigenvalue of $\Gamma(\mathbb Z_n)$ has been obtained and  a necessary and sufficient condition for its equality has also been determined.
Finally we discuss the multiplicity of the spectral radius and the multiplicity of the algebraic connectivity of $\Gamma(\mathbb Z_n)$.
Some problems have been discussed at the end of this article for further research.

\end{abstract}
\textbf{Keywords:}  comaximal graph; laplacian eigenvalues; vertex connectivity; algebraic connectivity; laplacian spectral radius; finite ring
\\
\textbf{2010 Mathematics Subject Classification:} 05C25, 05C50.

\section{Introduction}
Let $G$ be a finite simple undirected graph of order $n$ with vertex set $V(G)=\{v_1,v_2,\ldots,v_n\}$.
Note that $v_i\sim v_j$ denotes that  $v_i$ adjacent to $v_j$ for $1\le i\neq j\le n$.
The \textit{adjacency matrix} of $G$  denoted by  $A(G)=(a_{ij})$ is an $n\times n$ matrix defined as $a_{ij}=1$ when $v_i\sim v_j$ and $0$ otherwise.
The \textit{Laplacian matrix} $L(G)$ of $G$ is defined as $L(G)=D(G)-A(G)$ where $A(G)$ is the adjacency matrix of $G$  and $D(G)$ is the diagonal matrix of vertex degrees.
Since the  matrix $L(G)$ is a real, symmetric and a positive semi-definite matrix, all its eigenvalues are real and  non-negative. 
Also  $0$ is an eigenvalue of $L(G)$ with eigenvector $[1,1,1,\ldots,1]^T$ whose multiplicity  equals the number of connected components in the graph $G$.
Let the  eigenvalues of $L(G)$ be denoted by  $\lambda_1\geq \lambda_2 \geq \cdots \geq \lambda_{n-1}\geq\lambda_n=0$.
The largest eigenvalue $\lambda_1$ is known as the \textit{spectral radius} of $G$ and the second smallest eigenvalue $\lambda_{n-1}$ is known as the \textit{algebraic connectivity} of $G$.
Also $\lambda_{n-1}>0$ if and only if $G$ is connected.
The term \textit{algebraic connectivity} was given by Fiedler in \cite{fiedler1973algebraic}.
A \textit{separating set} in a connected graph $G$ is a set $S\subset V(G)$ such that $V(G)\setminus S$ has more than $1$ connected component.
The \textit{vertex connectivity} of  $G$ denoted by $\kappa(G)$  is defined as $\kappa(G)=\min \{|S|: S \text{ is a separating set of  } G\}$.
The papers \cite{fiedler1973algebraic} and \cite{de2007old} list several interesting  properties of $\lambda_{n-1}$ and $\kappa$. 
Readers may refer to  \cite{merris1994laplacian} for a survey on  $L(G)$ of a graph $G$.
A graph $G$ whose adjacency matrix has all its eigenvalues as integers is known as \textit{integral} graph.
Harary and Schwenk  first posed the question "Which graphs have integral spectra" in \cite{harary1974graphs}.
Given a positive integer $n$, it is difficult to locate the graphs with integral spectra among the graphs having $n$ number of vertices.
For a survey and more detailed information about integral graphs, the readers are referred to \cite{balinska2002survey}.
A graph $G$ is called \textit{Laplacian integral} if all the eigenvalues of $L(G)$ are integers.
\\
\\
Let $R$ be a commutative ring with unity.
The \textit{comaximal graph} of a ring $R$ denoted by $\Gamma (R)$ was introduced by Sharma and Bhatwadekar in \cite{sharma1995note}.
The vertices of the graph $\Gamma(R)$ are  the elements of the ring $R$ and any two distinct vertices $x,y$ of $\Gamma(R)$ are adjacent if and only if $Rx+Ry=R$.
They  proved that $R$ is a finite ring if and only if the \textit{chromatic number} of $\Gamma(R)$ denoted by $\chi(\Gamma(R))$ is finite.
It was further shown  that  $\chi(\Gamma(R))$  satisfies $\chi(\Gamma(R))=t+l$ where $t$ denotes the number of \textit{maximal ideals} of $R$ and $l$  denotes the number of \textit{units} of  $R$.
A lot of research has been done on the comaximal graph of a ring $R$ over the last few decades.
For some literature on $\Gamma(R)$, readers may refer to the works \cite{maimani2008comaximal,moconja2011structure} and \cite{samei2014comaximal}.  
\\
\\
Let $\mathbb{Z}_n$ denote the ring of integers modulo $n$ where $n>2$.
Let the  elements of  $\mathbb{Z}_n$ be denoted by  $0,1,2,\ldots,n-2,n-1$.
The number of integers prime to $n$ and less than $n$ is denoted by \textit{Euler Totient} function $\phi(n)$.
We say that $d$ is a \textit{proper} divisor of $n$ if $d$ divides $n$ and $d$ does not equal $1$ or $n$.
A \textit{subring} $I$ of a ring $R$ is said to be an \textit{ideal} of $R$ if for all $r\in R$ and $i\in I$, $ri,ir\in I$.
The \textit{ideal generated by $r$} is defined to be the smallest ideal of $R$ containing $r$ and is denoted by $\langle r \rangle$.
A ring $R$ is said to be a \textit{Principal Ideal Ring (PIR)} if every ideal $I$ of $R$ is of the form $I=\langle r\rangle$ for some $r\in R$.
A ring $R$ is said to be an \textit{integral domain} if $xy=0$ in $R$ implies either $x=0$ or $y=0$. 
A ring $R$ is said to be a \textit{Principal Ideal Domain (PID)}  if it is a PIR and an integral domain.
For any given set $A$,  $|A|$ denotes the number of elements in the set $A$.
\\
\\
By a \textit{null} graph on $n$ vertices denoted by $\Theta_n$, we shall mean a graph having $n$ vertices and zero edges.
An \textit{empty} graph means a graph with no vertices and edges.
The \textit{join} of two graphs $G_1=(V_1,E_1)$ and $G_2=(V_2,E_2)$ denoted by  $G_1\lor G_2$ is a graph obtained from $G_1$ and $G_2$ by joining each vertex of $G_1$ to all vertices of $G_2$.
The \textit{union} of two graphs $G_1=(V_1,E_1)$ and $G_2=(V_2,E_2)$ denoted by $G_1\cup G_2$  is the graph with vertex set $V=V_1 \cup V_2$ and edge set $E=E_1 \cup E_2$.
For various standard terms related to   ring theory and graph theory  used in the article, the readers are referred to the texts   \cite{dummit2004abstract} and  \cite{diestel2000graduate}  respectively.
\\
\\
Throughout this article by \textit{eigenvalues} and \textit{characteristic polynomial} of $\Gamma(\mathbb{Z}_n)$, we shall mean the eigenvalues and characteristic polynomial of $L(\Gamma(\mathbb{Z}_n))$.
The set of all eigenvalues of  $G$ is denoted by $\sigma(G)$.
We denote the characteristic polynomial of  $G$ by $\mu(G,x)$.
\subsection{Arrangement of the Article}
In \textbf{Section} \ref{Sec2} we provide the preliminary  theorems that have been used throughout the article.
In  \textbf{Section} \ref{Sec3} we discuss the structure of $\Gamma(\mathbb{Z}_n)$ and express it in terms of its subgraphs [Equation \ref{join}].
Using it we find the \textit{characteristic polynomial} of $\Gamma(\mathbb{Z}_n)$ for $n> 2$ [Theorem \ref{Th1}].
In \textbf{Subsection} \ref{Any} of \textbf{Section} \ref{Sec3} we provide the procedure to find the spectra of $\Gamma(\mathbb{Z}_n)$ for any $n> 2$.
We then  explicitly determine the \textit{spectra}  of $\Gamma(\mathbb{Z}_n)$ in \textbf{Subsection} \ref{Integral} for  $n=p^\alpha q^\beta$ where $p,q$ are distinct primes and $\alpha,\beta$ are non-negative integers [Theorems \ref{pm}, \ref{prod}] and conclude that $\Gamma(\mathbb{Z}_n)$ is Laplacian Integral for $n=p^\alpha q^\beta$.
In \textbf{Section} \ref{Sec4} we discuss  \textit{vertex connectivity} and \textit{algebraic connectivity } of  $\Gamma(\mathbb{Z}_n)$ and show that  $\kappa(\Gamma(\mathbb{Z}_n))= 
\lambda_{n-1}(\Gamma(\mathbb{Z}_n))$ for all $n> 2$ [Theorem \ref{VC=AC}].
In \textbf{Section} \ref{Sec5}, we find an upper bound of  the second largest eigenvalue of $\Gamma(\mathbb Z_n)$  and determine a necessary and sufficient condition when it attains its bounds
[Theorem \ref{Th7}].
We use it to determine the multiplicity of the spectral radius of $\Gamma(\mathbb Z_n)$ [Theorem \ref{mult spectral}].
We also determine the multiplicity of the algebraic connectivity of $\Gamma(\mathbb Z_n)$ [Theorem \ref{multAC}]  and provide certain results regarding the connectivity of an induced subgraph of $\Gamma(\mathbb Z_n)$ in subsection \ref{Sec50}.
Finally in \textbf{Section} \ref{Sec6}  we provide some problems for further research.

\section{Preliminaries}
\label{Sec2}
In this section we will provide some preliminary  theorems that will
be required in our subsequent sections. 
\begin{theorem}[Corollary 3.7 of \cite{mohar1991laplacian}]
	\label{Thjoin}
	Let $G_1\lor G_2$ denote the join of two graphs $G_1$ and $G_2$.
	Then $$\mu(G_1\lor G_2,x)=\frac{x(x-n_1-n_2)}{(x-n_1)(x-n_2)}\mu(G_1,x-n_2)\mu(G_2,x-n_1)$$
	where $n_1$ and $n_2$ are orders of $G_1$ and $G_2$  respectively.
\end{theorem}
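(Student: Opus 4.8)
The plan is to work directly with the block structure of the Laplacian of the join. Order the vertices so that those of $G_1$ come first and those of $G_2$ second. Since passing to $G_1 \lor G_2$ makes every vertex of $G_1$ adjacent to all $n_2$ vertices of $G_2$ (and symmetrically), the degree of each vertex of $G_i$ increases by $n_{3-i}$, so
$$L(G_1\lor G_2)=\begin{pmatrix} L(G_1)+n_2 I_{n_1} & -J_{n_1\times n_2}\\ -J_{n_2\times n_1} & L(G_2)+n_1 I_{n_2}\end{pmatrix},$$
where $J_{p\times q}$ denotes the all-ones $p\times q$ matrix and $I_m$ the identity of order $m$. Thus $\mu(G_1\lor G_2,x)=\det\!\big(xI_{n_1+n_2}-L(G_1\lor G_2)\big)$, and the idea is to compute this by splitting $\mathbb{R}^{n_1+n_2}$ into three mutually orthogonal subspaces that are invariant under $L(G_1\lor G_2)$ and reading off the characteristic polynomial of the restriction to each.

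First I would use that $L(G_i)\mathbf{1}_{n_i}=0$ and that, being symmetric, $L(G_i)$ has an orthonormal eigenbasis $\{\mathbf{1}_{n_i}/\sqrt{n_i},\,u_2^{(i)},\dots,u_{n_i}^{(i)}\}$ with the last $n_i-1$ vectors orthogonal to $\mathbf{1}_{n_i}$. For a vector $(v,0)$ with $L(G_1)v=\mu v$ and $v\perp\mathbf{1}_{n_1}$ one has $J_{n_2\times n_1}v=\mathbf{1}_{n_2}(\mathbf{1}_{n_1}^{T}v)=0$, so $L(G_1\lor G_2)(v,0)=((\mu+n_2)v,0)$; hence each of the $n_1-1$ Laplacian eigenvalues $\mu$ of $G_1$ coming from an eigenvector orthogonal to $\mathbf{1}_{n_1}$ contributes an eigenvalue $\mu+n_2$ of the join. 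Writing $\mu(G_1,x)=x\prod_{i\ge 2}(x-\mu_i^{(1)})$, these eigenvectors account for the factor $\prod_{i\ge 2}\big((x-n_2)-\mu_i^{(1)}\big)=\mu(G_1,x-n_2)/(x-n_2)$. Symmetrically, the vectors $(0,w)$ with $w\perp\mathbf{1}_{n_2}$ are eigenvectors of the join with eigenvalues shifted by $n_1$, contributing $\mu(G_2,x-n_1)/(x-n_1)$.

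The remaining invariant subspace is the two-dimensional span of $(\mathbf{1}_{n_1},0)$ and $(0,\mathbf{1}_{n_2})$. A direct computation gives $L(G_1\lor G_2)(\mathbf{1}_{n_1},0)=(n_2\mathbf{1}_{n_1},-n_1\mathbf{1}_{n_2})$ and $L(G_1\lor G_2)(0,\mathbf{1}_{n_2})=(-n_2\mathbf{1}_{n_1},n_1\mathbf{1}_{n_2})$, so in this (non-orthonormal) basis $L(G_1\lor G_2)$ acts as $\bigl(\begin{smallmatrix} n_2 & -n_2\\ -n_1 & n_1\end{smallmatrix}\bigr)$, whose characteristic polynomial is $x^2-(n_1+n_2)x=x(x-n_1-n_2)$. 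Since $L(G_1\lor G_2)$ is symmetric and the three subspaces are orthogonal and invariant with dimensions $(n_1-1)+(n_2-1)+2=n_1+n_2$, the characteristic polynomial of $L(G_1\lor G_2)$ is the product of the three pieces, which is exactly $\dfrac{x(x-n_1-n_2)}{(x-n_1)(x-n_2)}\,\mu(G_1,x-n_2)\,\mu(G_2,x-n_1)$.

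The only subtlety worth flagging — the nearest thing to an obstacle — is the denominator $(x-n_1)(x-n_2)$ on the right-hand side: one must note that $\mu(G_i,x)$ is divisible by $x$ because $0$ is always a Laplacian eigenvalue, so each quotient $\mu(G_i,x-n_j)/(x-n_j)$ is a genuine polynomial and the stated formula is an identity of polynomials (the apparent poles at $x=n_1,n_2$ are removable), valid also when $n_1=n_2$. Apart from this point, the argument is just the bookkeeping of assembling the factors from the three invariant subspaces.
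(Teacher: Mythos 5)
Your proof is correct. Note, however, that the paper itself gives no proof of this statement: it is quoted verbatim as Corollary 3.7 of the cited survey of Mohar et al., so there is nothing in the paper to compare against line by line. Your argument --- decomposing $\mathbb{R}^{n_1+n_2}$ into the two subspaces of vectors supported on one side and orthogonal to the corresponding all-ones vector, plus the two-dimensional span of $(\mathbf{1}_{n_1},0)$ and $(0,\mathbf{1}_{n_2})$, and multiplying the characteristic polynomials of the restrictions --- is a standard direct proof, and all the computations check out: the block form of $L(G_1\lor G_2)$, the annihilation of the $J$-blocks on vectors orthogonal to $\mathbf{1}$, the $2\times 2$ matrix $\bigl(\begin{smallmatrix} n_2 & -n_2\\ -n_1 & n_1\end{smallmatrix}\bigr)$ with characteristic polynomial $x(x-n_1-n_2)$, and the dimension count. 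You were also right to flag that the quotients $\mu(G_i,x-n_j)/(x-n_j)$ are genuine polynomials because $0$ is always a Laplacian eigenvalue. For comparison, the cited source obtains the formula by a different route: it first proves that the Laplacian spectrum of the complement $G^c$ of a graph $G$ on $n$ vertices is $\{0\}\cup\{n-\lambda_i(G)\}$ and then writes $G_1\lor G_2=(G_1^c\cup G_2^c)^c$, combining this with the factorization of $\mu$ over disjoint unions. That derivation is shorter given the complement lemma, while yours is self-contained and makes the eigenvectors of the join explicit, which is arguably more informative.
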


\begin{theorem}[Theorem $3.1$ of \cite{mohar1991laplacian}]\label{Thunion}
	Let $G$ be the disjoint union of the graphs $G_1,G_2,\ldots, G_k$. Then
	$$\mu(G,x)=\prod_{i=1}^{k}\mu(G_i,x).$$
\end{theorem}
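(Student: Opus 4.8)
The plan is to exploit the block structure that the Laplacian matrix inherits directly from the disjoint-union structure of $G$. First I would fix, for each $i$, an ordering of the vertices of $G_i$, and then order the vertices of $G$ by concatenating these lists in the order $i=1,2,\dots,k$. Relative to any other labelling the characteristic polynomial is unchanged, since a relabelling replaces $L(G)$ by $P^{T}L(G)P$ for a permutation matrix $P$, and $\det\!\big(xI-P^{T}L(G)P\big)=\det\!\big(xI-L(G)\big)$.

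Next I would observe that with this concatenated ordering $L(G)$ is block diagonal with diagonal blocks $L(G_1),\dots,L(G_k)$. Indeed, since $G$ is the disjoint union of the $G_i$, no edge of $G$ joins a vertex of $G_i$ to a vertex of $G_j$ when $i\neq j$, so the corresponding off-diagonal blocks of the adjacency matrix $A(G)$ are zero; moreover the degree in $G$ of a vertex of $G_i$ equals its degree inside $G_i$, so the $i$-th diagonal block of the degree matrix $D(G)$ is $D(G_i)$. Hence the $i$-th diagonal block of $L(G)=D(G)-A(G)$ is $D(G_i)-A(G_i)=L(G_i)$, while the off-diagonal blocks all vanish.

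Finally I would conclude using multiplicativity of the determinant over block-diagonal matrices: $xI_n-L(G)$ is block diagonal with diagonal blocks $xI_{n_i}-L(G_i)$, so
$$\mu(G,x)=\det\!\big(xI_n-L(G)\big)=\prod_{i=1}^{k}\det\!\big(xI_{n_i}-L(G_i)\big)=\prod_{i=1}^{k}\mu(G_i,x).$$
If one prefers to avoid invoking the general block-determinant formula, the same identity follows by induction on $k$ from the case $k=2$, which is the standard fact that the determinant of a $2\times2$ block-diagonal matrix equals the product of the determinants of its two diagonal blocks. There is essentially no real obstacle here; the only point that requires a little care is the bookkeeping showing that passing to the concatenated vertex ordering does not change $\mu(G,x)$ and that the Laplacian genuinely decomposes as claimed, after which the statement is routine linear algebra.
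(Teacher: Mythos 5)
Your proof is correct: the paper states this result as a preliminary quoted from Mohar et al.\ (Theorem 3.1 of that reference) and gives no proof of its own, and your block-diagonal decomposition of $L(G)$ together with the multiplicativity of the determinant over diagonal blocks is exactly the standard argument one would supply. Nothing is missing; the remarks about invariance under vertex relabelling and the identification of the diagonal blocks with $L(G_i)$ cover the only points that need care.
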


\begin{theorem}[Theorem 2.2 of \cite{mohar1991laplacian}]
	\label{Th3}
If $G$ is a simple graph on $n$ vertices then the largest eigenvalue $\lambda_1$ of $G$ satisfies $\lambda_1\le n$, where the equality holds  if and only if its complement $G^c$ is disconnected.
	
\end{theorem}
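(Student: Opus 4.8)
The plan is to deduce the statement from the complementary identity relating the Laplacians of $G$ and $G^{c}$. Write $J$ for the all-ones $n\times n$ matrix and $K_{n}$ for the complete graph on $n$ vertices. Since the edge sets of $G$ and $G^{c}$ together make up that of $K_{n}$, one has $L(G)+L(G^{c})=L(K_{n})=nI-J$. Because every row and column sum of a Laplacian is $0$, we get $L(G)J=JL(G)=0$, so $L(G)$ commutes with $nI-J$ and hence with $L(G^{c})=nI-J-L(G)$. Both matrices being real symmetric and commuting, they share an orthonormal eigenbasis, with $\mathbf 1=[1,\dots,1]^{T}$ a common eigenvector of eigenvalue $0$.

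The next step is to restrict to the $(n-1)$-dimensional subspace $\mathbf 1^{\perp}$, on which $J$ acts as $0$ and hence $L(G)+L(G^{c})=nI$. Thus if $v\in\mathbf 1^{\perp}$ satisfies $L(G)v=\mu v$, then $L(G^{c})v=(n-\mu)v$, so the eigenvalues of $L(G^{c})$ on $\mathbf 1^{\perp}$ are exactly $n-\lambda_{1}(G),\dots,n-\lambda_{n-1}(G)$. Taking the smallest of these gives the identity
\begin{equation*}
\lambda_{n-1}(G^{c})=n-\lambda_{1}(G),
\end{equation*}
so the algebraic connectivity of $G^{c}$ and the spectral radius of $G$ add up to $n$ (the degenerate case where $G$ has no edges reads $n=n-0$). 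From here the bound $\lambda_{1}(G)\le n$ is immediate since $L(G^{c})$ is positive semi-definite, and equality holds exactly when $\lambda_{n-1}(G^{c})=0$, i.e.\ (by the facts recalled in the introduction, that the multiplicity of $0$ as a Laplacian eigenvalue equals the number of connected components) exactly when $G^{c}$ is disconnected.

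The points that need care are the simultaneous-diagonalization bookkeeping --- one must separate the ``trivial'' zero eigenvalue of $L(G^{c})$ carried by $\mathbf 1$ from any further zero, since it is precisely a second zero that detects disconnectedness of $G^{c}$ --- and the invocation of the component-counting fact in the equality case. As a self-contained alternative for the inequality, I would use the Rayleigh quotient $\lambda_{1}(G)=\max_{x\neq 0}\big(\sum_{ij\in E(G)}(x_{i}-x_{j})^{2}\big)\big/\big(\sum_{i}x_{i}^{2}\big)$ and enlarge the numerator to a sum over all unordered pairs, obtaining $\sum_{ij\in E(G)}(x_{i}-x_{j})^{2}\le\sum_{i<j}(x_{i}-x_{j})^{2}=n\sum_{i}x_{i}^{2}-\big(\sum_{i}x_{i}\big)^{2}\le n\sum_{i}x_{i}^{2}$; then checking when both inequalities are simultaneously tight forces an extremal $x$ to be constant on each connected component of $G^{c}$ and to satisfy $\sum_{i}x_{i}=0$, and such a nonzero $x$ exists precisely when $G^{c}$ has at least two components. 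I expect the equality analysis, not the bound, to be the main obstacle.
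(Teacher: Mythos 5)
The paper does not prove this statement; it is quoted verbatim as Theorem 2.2 of the cited survey of Mohar et al.\ and used as a black box, so there is no in-paper argument to compare against. Your proof is correct and is essentially the standard one: the complementation identity $L(G)+L(G^{c})=nI-J$ restricted to $\mathbf 1^{\perp}$ gives $\lambda_{n-1}(G^{c})=n-\lambda_{1}(G)$, from which both the bound and the equality criterion (a second zero eigenvalue of $L(G^{c})$ detecting disconnectedness) follow; you correctly flag and handle the only delicate point, namely separating the trivial zero carried by $\mathbf 1$ from a further zero on $\mathbf 1^{\perp}$. Your Rayleigh-quotient alternative is also sound, including the equality analysis forcing an extremal vector to be constant on the components of $G^{c}$ with zero sum.
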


\begin{definition}[Definition 3.9.1 of \cite{cvetkovic2009introduction}]
\label{Def1}
Given a graph $G$ with vertex set $V(G)$, a partition $V(G)=V_1 \cup V_2 \cup \dots \cup V_k$ is said to be an \textit{equitable partition} of $G$ if every vertex in $V_i$ has the same number of neighbors $b_{ij}$ in $V_j$ where $1\le i,j\le k$.
\end{definition}

\begin{theorem} [Theorem $2.1$ of \cite{kirkland2002graphs}]
	\label{VCequalsAC}
	Let $G$ be a non-complete, connected graph on $n$ vertices.
	Then $\kappa(G)=\lambda_{n-1}(G)$  if and only if $G$ can be written as $G=\mathcal{G}_1\lor \mathcal{G}_2$, where $\mathcal{G}_1$ is a disconnected graph on $n-\kappa(G)$ vertices and $\mathcal{G}_2$ is a graph on $\kappa(G)$ vertices with $\lambda_{n-1}(\mathcal{G}_2)\geq 2\kappa(G)-n$. 
\end{theorem}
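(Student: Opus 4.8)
The plan is to prove the two implications separately. The reverse implication is short and rests only on Theorem~\ref{Thjoin}; the forward implication is the substance of the result and I would attack it with a Rayleigh quotient argument built around a minimum vertex cut.

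The first step, used in both directions, is to read off the Laplacian spectrum of a join from Theorem~\ref{Thjoin}: cancelling the rational prefactor shows that, for graphs $H_1,H_2$ on $m_1,m_2$ vertices, the Laplacian eigenvalues of $H_1\lor H_2$ are $0$, then $m_1+m_2$, then the numbers $m_2+\mu$ as $\mu$ ranges over the Laplacian eigenvalues of $H_1$ with one copy of $0$ deleted, and finally the numbers $m_1+\nu$ as $\nu$ ranges over the Laplacian eigenvalues of $H_2$ with one copy of $0$ deleted. For the reverse implication, given $G=\mathcal{G}_1\lor\mathcal{G}_2$ with $\mathcal{G}_1$ disconnected on $m_1=n-\kappa(G)$ vertices and $\mathcal{G}_2$ on $m_2=\kappa(G)$ vertices, I would note that the disconnectedness of $\mathcal{G}_1$ puts a second copy of $0$ in its spectrum, hence $m_2=\kappa(G)$ appears in the list, and then check that every other nonzero entry is $\ge\kappa(G)$: trivially for $m_1+m_2$ and for $m_2+\mu$ with $\mu\ge0$, and for $m_1+\nu$ because $\nu\ge\lambda_{n-1}(\mathcal{G}_2)\ge 2\kappa(G)-n$ gives $m_1+\nu\ge(n-\kappa(G))+(2\kappa(G)-n)=\kappa(G)$. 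This yields $\lambda_{n-1}(G)=\kappa(G)$.

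For the forward implication, suppose $\lambda_{n-1}(G)=\kappa(G)=:k$ and fix a minimum vertex cut $S$ with $|S|=k$. Let $A$ be the vertex set of one component of $G-S$ and $B$ the union of the remaining components (nonempty since $G-S$ is disconnected), and set $a=|A|$, $b=|B|$. I would test the Rayleigh quotient on the vector $\mathbf{y}$ equal to $b$ on $A$, to $-a$ on $B$ and to $0$ on $S$; this vector is orthogonal to the all-ones vector, and since the only edges contributing to $\sum_{uv\in E(G)}(y_u-y_v)^2$ are the $S$--$A$ and $S$--$B$ edges, the quotient equals $\bigl(b^2 e(S,A)+a^2 e(S,B)\bigr)/\bigl(ab(a+b)\bigr)$ (writing $e(X,Y)$ for the number of edges joining $X$ and $Y$), which is $\le k$ with equality if and only if every vertex of $S$ is joined to every vertex of $A\cup B=V(G)\setminus S$. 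Because $\lambda_{n-1}(G)=k$ is the minimum of such quotients, equality is forced, so $G=(G-S)\lor G[S]$; I would then set $\mathcal{G}_1=G-S$ (disconnected, on $n-k$ vertices) and $\mathcal{G}_2=G[S]$ (on $k$ vertices), and apply the spectrum description of a join once more to get $\lambda_{n-1}(G)=\min\{k,(n-k)+\lambda_{n-1}(\mathcal{G}_2)\}$; the hypothesis $\lambda_{n-1}(G)=k$ then forces $\lambda_{n-1}(\mathcal{G}_2)\ge 2k-n$, as required.

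The hard part will be the forward direction, and two points within it deserve care. First, the bookkeeping in the Rayleigh computation must be tight enough that equality in $e(S,A)\le ka$ and $e(S,B)\le kb$ really does certify \emph{every} edge between $S$ and $V(G)\setminus S$, so that $G$ is genuinely the advertised join rather than merely close to it. Second, when the inner factor $\mathcal{G}_2=G[S]$ is itself disconnected one has $\lambda_{n-1}(\mathcal{G}_2)=0$, and one must confirm this is consistent with $2k-n\le 0$ --- which it is, since the minimum in the join-spectrum formula already forces $(n-k)+\lambda_{n-1}(\mathcal{G}_2)\ge k$. I would also note at the outset that $G$ is connected and non-complete (given in the hypothesis for the $(\Rightarrow)$ direction, and forced by $\mathcal{G}_1$ being a disconnected graph on at least two vertices in the $(\Leftarrow)$ direction), so that $\kappa(G)$ is defined and classical comparisons such as $\lambda_{n-1}(G)\le\kappa(G)$ of \cite{fiedler1973algebraic} remain available.
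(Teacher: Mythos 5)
This statement is Theorem~2.1 of \cite{kirkland2002graphs}; the paper imports it as a preliminary and supplies no proof of its own, so there is no internal argument to compare yours against. Judged on its own terms, your proof is essentially correct and self-contained: the reverse direction by reading the join spectrum off Theorem~\ref{Thjoin} (the nonzero eigenvalues of $\mathcal{G}_1\lor\mathcal{G}_2$ are $n$, the shifts $m_2+\mu$, and the shifts $m_1+\nu$, with one zero deleted from each factor's spectrum), and the forward direction by sharpening the standard Rayleigh-quotient proof of Fiedler's inequality $\lambda_{n-1}(G)\le\kappa(G)$ to extract the equality case. The computation with the test vector ($b$ on $A$, $-a$ on $B$, $0$ on $S$) does force $e(S,A)=\kappa(G)|A|$ and $e(S,B)=\kappa(G)|B|$, hence the full join structure, and the identity $\lambda_{n-1}(G)=\min\{\kappa(G),\,(n-\kappa(G))+\lambda_{n-1}(\mathcal{G}_2)\}$ then correctly yields the eigenvalue condition; your handling of a disconnected $\mathcal{G}_2$ is also right. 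Two small points to tidy: (i) when $\kappa(G)=1$ the factor $\mathcal{G}_2$ is a single vertex, the multiset $\sigma(\mathcal{G}_2)\setminus\{0\}$ is empty, and ``$\lambda_{n-1}(\mathcal{G}_2)$'' is undefined, so that degenerate case needs a separate sentence or an explicit convention; (ii) you should state that in the theorem $\lambda_{n-1}(\mathcal{G}_2)$ abuses notation for the algebraic connectivity of the $\kappa(G)$-vertex graph $\mathcal{G}_2$, i.e.\ the smallest element surviving after one copy of $0$ is deleted, which is exactly the quantity your bounds use.
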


\begin{definition}[See Page $15$ of \cite{schwenk1974computing}]
	\label{generalizedcomposition}
	Let $H$ be a graph with vertex set $V(H)=\{1,2,\ldots, k\}$.
	Let $G_i$ be disjoint graphs of order $n_i$  with vertex sets $V(G_i)$ where $1\le i\le k$.
	The H-join of graphs $G_1,G_2,\dots, G_k$ denoted by $H[G_1,G_2,\dots,G_k]$ is formed by taking the graphs $G_i$  and any two vertices $v_i\in G_i$ and $v_j\in G_j$ are adjacent if $i$ is adjacent to $j$ in $H$. 
\end{definition}

\begin{theorem}[Theorem $8$ of \cite{cardoso2013spectra}]
	\label{spectrajoin}
	Let us consider  a family of $k$ graphs $G_j$ of order $n_j$, with $j\in \{1,2,\ldots,k\}$ having Laplacian spectrum $\sigma(G_j)$.
	If $H$ is a graph such that $V(H)=\{1,2\ldots,k\}$, then the Laplacian spectrum of  $H[G_1,G_2\ldots,G_k]$ is given by 
	$$\sigma\bigg(H[G_1,G_2\ldots,G_k]\bigg)=\bigg(\bigcup_{j=1}^k(N_j+\sigma(G_j)\setminus\{0\})\bigg)\cup \sigma(M)$$
	where 
	
	\begin{equation}
	\label{matrix}
	M=
	\left[
	\begin{array}{ccccccccccccccccc}
	N_1 & -\rho_{1,2}& \ldots & -\rho_{1,k}
	\\
	-\rho_{1,k} & N_2 & \ldots & -\rho_{2,k}
	\\
	\vdots & \vdots &\ddots 
	\\
	-\rho_{1,k}& -\rho_{2,k} &\ldots & N_k
	\end{array}
	\right],
	\end{equation}
	
	$\rho_{a,b}=\rho_{b,a}=\begin{cases}
	\sqrt{n_an_b}& \text{ if } lq\in E(H)
	\\
	0 & \text{ otherwise} 
	\end{cases}$
	and
	$N_j=\begin{cases}
	\sum_{i\in N_H(j)} n_i & \text{ if } N_H(j)\neq \emptyset\\
	0 & \text{ otherwise}.
	\end{cases}$
	\\
	Here $N_H(j)=\{i: ij\in E(H)\}$.
\end{theorem}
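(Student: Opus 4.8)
The plan is to work directly with the Laplacian $L:=L\big(H[G_1,\dots,G_k]\big)$, viewing it in the block form induced by the natural partition $V\big(H[G_1,\dots,G_k]\big)=V(G_1)\cup\cdots\cup V(G_k)$, and then to split $\mathbb R^{n}$, where $n=\sum_j n_j$, as an orthogonal direct sum of two $L$-invariant subspaces whose restricted spectra are precisely the two pieces appearing in the statement. First I would record the block structure: by the definition of the $H$-join each $v\in V(G_j)$ has degree $d_{G_j}(v)+N_j$, so the $(j,j)$ block of $L$ is $L(G_j)+N_jI_{n_j}$, while for $i\neq j$ the $(i,j)$ block equals $-J_{n_i\times n_j}$ (the all-ones matrix) when $ij\in E(H)$ and the zero matrix otherwise.

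Next comes the structural observation driving everything: $L(G_j)\mathbf 1_{n_j}=0$ and $L(G_j)$ leaves $\mathbf 1_{n_j}^{\perp}$ invariant, while the off-diagonal block $-J_{n_i\times n_j}$ annihilates every vector of $V(G_j)$ orthogonal to $\mathbf 1_{n_j}$. Hence the subspace $W_1$ of all vectors supported on a single block $V(G_j)$ and orthogonal there to $\mathbf 1_{n_j}$ is $L$-invariant, and on its $G_j$-part $L$ acts as $L(G_j)+N_jI$ restricted to $\mathbf 1_{n_j}^{\perp}$, whose spectrum is $N_j+\big(\sigma(G_j)\setminus\{0\}\big)$; removing one copy of $0$ reflects discarding the single eigenvector $\mathbf 1_{n_j}$, whereas any further zeros of $L(G_j)$ (present if $G_j$ is disconnected) have eigenvectors lying in $\mathbf 1_{n_j}^{\perp}$ and survive, shifted to $N_j$. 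Summing over $j$ yields $\bigcup_{j=1}^{k}\big(N_j+\sigma(G_j)\setminus\{0\}\big)$, accounting for $\sum_j(n_j-1)=n-k$ eigenvalues.

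It remains to handle the orthogonal complement $W_2:=W_1^{\perp}$, the $k$-dimensional space spanned by the embedded normalized block indicators $\hat y_j:=\tfrac{1}{\sqrt{n_j}}\mathbf 1_{n_j}$ (zero off block $j$). Writing a general element as $x=\sum_j c_j\hat y_j$ and using $L(G_j)\mathbf 1_{n_j}=0$ together with $J_{n_j\times n_i}\mathbf 1_{n_i}=n_i\mathbf 1_{n_j}$, one computes that the $j$-th block of $Lx$ equals $\big(c_jN_j-\sum_{i:\,ij\in E(H)}\sqrt{n_in_j}\,c_i\big)\hat y_j$, so in the coordinates $(c_1,\dots,c_k)$ the operator $L|_{W_2}$ is represented by exactly the symmetric matrix $M$ of the statement; thus $\sigma(L|_{W_2})=\sigma(M)$. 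Since $\mathbb R^{n}=W_1\oplus W_2$ is an orthogonal sum of $L$-invariant subspaces, $\sigma(L)$ is the multiset union of the two restricted spectra, which is the asserted formula. One should also note that the bookkeeping stays consistent when $j$ is isolated in $H$, i.e. $N_H(j)=\emptyset$ and $N_j=0$: the ``missing'' zero of $\sigma(G_j)$ is then supplied by the eigenvalue $0$ of $M$ coming from its vanishing $j$-th row and column.

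The only genuinely delicate point — and the one I would write out most carefully — is the treatment of the zero eigenvalues and the symbol $\setminus\{0\}$: one must verify that exactly one copy of $0$ per block is absorbed into $W_2$ through $\mathbf 1_{n_j}$, that the remaining zero-multiplicity of a disconnected $G_j$ stays in $W_1$ and is translated by $N_j$, and that the isolated-vertex case loses no eigenvalue. Everything else reduces to the routine block-matrix computations sketched above.
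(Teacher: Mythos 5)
Your proof is correct and complete. Note that the paper itself offers no proof of this statement---it is imported verbatim as Theorem 8 of the cited reference of Cardoso, de Freitas, Martins and Robbiano---and your argument (decomposing $\mathbb{R}^{n}$ orthogonally into the per-block mean-zero vectors, on which $L$ acts as $L(G_j)+N_jI$ restricted to $\mathbf{1}_{n_j}^{\perp}$, plus the $k$-dimensional span of the normalized block indicators, on which $L$ is represented by $M$) is essentially the proof given there. As a side benefit, your computation of the $(i,j)$ entry of $L|_{W_2}$ makes explicit that $M$ is the symmetric matrix with off-diagonal entries $-\rho_{i,j}$, which corrects the obvious typographical errors in the displayed form of $M$ and in the condition ``$lq\in E(H)$'' (which should read $ab\in E(H)$) in the statement.
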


\section{Structure of  $\Gamma(\mathbb{Z}_n)$ and its Laplacian Spectra }
\label{Sec3}
In this section we first give a description of the structure of $\Gamma(\mathbb{Z}_n)$.
We show that $\Gamma(\mathbb{Z}_n)$ can be expressed as the join and union of certain sub-graphs of $\Gamma(\mathbb{Z}_n)$.
We then investigate Laplacian Spectra of $\Gamma(\mathbb{Z}_n)$ for various $n$.
\\
We first find an equivalent condition for adjacency of two vertices in  $\Gamma(\mathbb{Z}_n)$.
\\
\\
Using the  adjacency criterion for any two vertices in $\Gamma(R)$, we find that two vertices $v_i,v_j\in \Gamma(\mathbb{Z}_n)$ are adjacent if and only if $\mathbb{Z}_n v_i+\mathbb Z_n v_j=\mathbb{Z}_n$.
Now since $\mathbb{Z}_n$ is a \textit{Principal Ideal Domain}, so $\mathbb{Z}_n v_i=\langle v_i\rangle$.
Thus the adjacency criterion in $\Gamma(\mathbb{Z}_n)$ becomes the following:
\begin{align}
\label{criterion}
\begin{split}
v_i \text{ is adjacent to } v_j  \text{ in } \Gamma(\mathbb{Z}_n) \iff \langle v_i\rangle +\langle v_j\rangle =\mathbb{Z}_n \\
\text{ where } \langle v_i\rangle ,\langle v_j\rangle \text{ denote the ideal generated by } v_i, v_j \text{ respectively }.
\end{split}
\end{align}

If $V(\Gamma(\mathbb{Z}_n))$ denotes the set of vertices of $\Gamma(\mathbb{Z}_n)$ then $V(\Gamma(\mathbb{Z}_n))=\mathcal{S}\cup \mathcal{T}$ where $\mathcal{S}= \{a:\gcd(a,n)=1\}$ and $\mathcal{T}=\mathbb Z_n\setminus \mathcal{S}$. 
Thus the vertex set of $\Gamma(\mathbb{Z}_n)$ is $\mathcal{S}\cup \mathcal{T}$.
The following observations about $\Gamma(\mathbb{Z}_n)$ can be easily made.
\begin{lemma}\label{Lemma1}
	
If $v\in \mathcal{S}$ then $v$ is adjacent to $w$ for all   $w\in \Gamma(\mathbb{Z}_n)$ and hence  $\deg(v)=n-1$.
	
\end{lemma}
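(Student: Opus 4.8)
The plan is to unwind the definitions, so the argument will be very short. First I would identify the set $\mathcal{S}=\{a:\gcd(a,n)=1\}$ with the group of units of $\mathbb{Z}_n$: an element $v$ satisfies $\gcd(v,n)=1$ precisely when there are integers $s,t$ with $sv+tn=1$, i.e.\ when $v$ is invertible modulo $n$. Hence for $v\in\mathcal{S}$ the principal ideal $\langle v\rangle$ contains $v^{-1}v=1$, and therefore $\langle v\rangle=\mathbb{Z}_n$.

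Next I would feed this into the adjacency criterion \eqref{criterion}. Let $w$ be any vertex of $\Gamma(\mathbb{Z}_n)$ with $w\neq v$. Then
$$\langle v\rangle+\langle w\rangle\supseteq\langle v\rangle=\mathbb{Z}_n,$$
so $\langle v\rangle+\langle w\rangle=\mathbb{Z}_n$ and, by \eqref{criterion}, $v\sim w$. Since $w$ was an arbitrary vertex distinct from $v$ and $\Gamma(\mathbb{Z}_n)$ has exactly $n$ vertices (the elements of $\mathbb{Z}_n$), the vertex $v$ is adjacent to all of the remaining $n-1$ vertices, which gives $\deg(v)=n-1$.

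There is essentially no obstacle here; the only point worth flagging is that in this paper the comaximal graph is built on the full ring $\mathbb{Z}_n$, so ``$v$ is adjacent to every other vertex'' genuinely yields degree $n-1$ rather than anything smaller. I would also note in passing that the lemma shows every one of the $|\mathcal{S}|=\phi(n)$ unit-vertices is a universal vertex; these vertices form a clique joined to all of $\mathcal{T}$, which is exactly the structural fact that will let one decompose $\Gamma(\mathbb{Z}_n)$ as a join of a complete graph on $\mathcal{S}$ with the induced subgraph on $\mathcal{T}$ in the next step.
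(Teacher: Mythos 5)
Your argument is correct and matches the paper's proof essentially verbatim: both identify $v\in\mathcal{S}$ as a unit, conclude $\langle v\rangle=\mathbb{Z}_n$, and then apply the adjacency criterion to get $\langle v\rangle+\langle w\rangle=\mathbb{Z}_n$ for every $w\neq v$, hence $\deg(v)=n-1$. The extra remarks about $\phi(n)$ universal vertices and the join decomposition are accurate and are exactly what the paper does next.
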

\begin{proof}
	Since $v$ is an unit in $\mathbb{Z}_n$, so $\langle v\rangle =\mathbb{Z}_n$.
	Thus for all   $w(\neq v)\in \Gamma(\mathbb{Z}_n)$, $\langle v\rangle+\langle w\rangle =\mathbb{Z}_n+\langle w\rangle =\mathbb Z_n$ and hence $\deg(v)=n-1$.
\end{proof}

\begin{lemma}\label{Lemma2}
	
The vertex $0\in \mathcal T$ is adjacent only to the members of $\mathcal{S}$ and hence $\deg(0)=|\mathcal{S}|=\phi(n)$.

\end{lemma}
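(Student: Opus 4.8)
The plan is to use the adjacency criterion \eqref{criterion}: the vertex $0$ is adjacent to a vertex $v$ if and only if $\langle 0 \rangle + \langle v \rangle = \mathbb{Z}_n$. Since $\langle 0 \rangle = \{0\}$, this condition reduces to $\langle v \rangle = \mathbb{Z}_n$, i.e.\ $v$ must be a unit of $\mathbb{Z}_n$. First I would observe that $v$ generates $\mathbb{Z}_n$ precisely when $\gcd(v,n)=1$, which is exactly the condition defining the set $\mathcal{S}$. Hence $0$ is adjacent only to members of $\mathcal{S}$, and to every such member (which also follows from Lemma \ref{Lemma1}, since each $v \in \mathcal{S}$ is adjacent to all other vertices).

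It then remains to count: $\deg(0) = |\mathcal{S}| = |\{a \in \mathbb{Z}_n : \gcd(a,n)=1\}| = \phi(n)$ by the definition of the Euler totient function. There is essentially no obstacle here; the only point requiring care is noting that $0 \notin \mathcal{S}$ (since $\gcd(0,n) = n \neq 1$ for $n > 2$), so that $0$ is genuinely a vertex of $\mathcal{T}$ and the adjacencies we counted do not include a loop at $0$. This is immediate from the definitions, so the whole argument is a short unwinding of the adjacency criterion together with the definition of $\mathcal{S}$.
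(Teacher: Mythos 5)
Your proof is correct and is exactly the routine unwinding of the adjacency criterion that the paper has in mind (the paper states this lemma as an easy observation and omits the proof entirely). The key step, $\langle 0\rangle+\langle v\rangle=\langle v\rangle=\mathbb{Z}_n$ if and only if $v$ is a unit, together with $|\mathcal{S}|=\phi(n)$, is all that is needed, and your remark that $0\notin\mathcal{S}$ is a reasonable piece of care.
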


Let $G_1$ denote the \textit{induced subgraph} of $\Gamma(\mathbb{Z}_n)$ on the set $\mathcal{S}$ and $G_2'$ denote the  \textit{induced subgraph} of $\Gamma(\mathbb{Z}_n)$ on the set $\mathcal{T}$.
Since $\mathcal{S}$ has $\phi(n)$ elements, using Lemma \ref{Lemma1} we find that $G_1$ is the complete graph on $\phi(n)$ vertices and hence $G_1\cong K_{\phi(n)}$.
Also using Lemma \ref{Lemma1} we find that  every vertex of $\Gamma(\mathbb{Z}_n)$ in $\mathcal{S}$ is adjacent to all the vertices of $\mathcal{T}$, which makes us conclude that 
\begin{equation*}
\Gamma(\mathbb{Z}_n)=G_1\lor G_2'\cong K_{\phi(n)} \lor G_2'
\end{equation*}
where $G_2'$ is a graph on $n-\phi(n)$ vertices.
	
Again using Lemma \ref{Lemma2} we find that $G_2'$ is the union of graphs $\Theta_1$  and $G_2$ where $G_2$ is a graph on $n-\phi(n)-1$ vertices and $\Theta_1$ denotes the null graph on the set $\{0\}$.
The graph $G_2$ is the induced sub-graph of  
$\Gamma(\mathbb{Z}_n)$ on the set $\mathcal{T}\setminus\{0\}$.
Thus we have
\begin{equation}\label{join}
\Gamma(\mathbb{Z}_n)\cong K_{\phi(n)} \lor G_2'\cong K_{\phi(n)}\lor(G_2\cup \Theta_1).
\end{equation}
We shall use Equation  \ref{join} repeatedly in our results.

\begin{theorem}\label{Th1}

The characteristic polynomial of $\Gamma(\mathbb{Z}_n)$ is $\mu(\Gamma(\mathbb{Z}_n))=x(x-n)^{\phi(n)}\mu(G_2,x-\phi(n))$
where $G_2$ is given by Equation \ref{join} .
		
\end{theorem}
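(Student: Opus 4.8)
The plan is to feed the decomposition from Equation \ref{join}, namely $\Gamma(\mathbb{Z}_n)\cong K_{\phi(n)}\lor(G_2\cup\Theta_1)$, into the join formula of Theorem \ref{Thjoin}, and then to clean up the resulting expression with the union formula of Theorem \ref{Thunion} together with the explicit Laplacian characteristic polynomials of the complete graph $K_{\phi(n)}$ and the one-vertex null graph $\Theta_1$. Concretely, I would take $G_1=K_{\phi(n)}$ of order $n_1=\phi(n)$ and $G_2'=G_2\cup\Theta_1$ of order $n_2=n-\phi(n)$, so that $n_1+n_2=n$.

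First I would apply Theorem \ref{Thjoin} to get
$$\mu(\Gamma(\mathbb{Z}_n),x)=\frac{x(x-n)}{(x-\phi(n))\bigl(x-n+\phi(n)\bigr)}\,\mu\bigl(K_{\phi(n)},x-n+\phi(n)\bigr)\,\mu\bigl(G_2\cup\Theta_1,\,x-\phi(n)\bigr).$$
Next I would record the two auxiliary facts. Since the Laplacian spectrum of $K_m$ is $\{0,m^{(m-1)}\}$, one has $\mu(K_m,y)=y(y-m)^{m-1}$; substituting $y=x-n+\phi(n)$ and using $n_2+\phi(n)=n$ gives $\mu(K_{\phi(n)},x-n+\phi(n))=(x-n+\phi(n))(x-n)^{\phi(n)-1}$. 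Also, the Laplacian of $\Theta_1$ is the $1\times1$ zero matrix, so $\mu(\Theta_1,x)=x$, and Theorem \ref{Thunion} then yields $\mu(G_2\cup\Theta_1,x)=x\,\mu(G_2,x)$, hence $\mu(G_2\cup\Theta_1,x-\phi(n))=(x-\phi(n))\,\mu(G_2,x-\phi(n))$.

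Substituting both expressions into the displayed formula, the factor $(x-\phi(n))$ cancels against the one coming from $\mu(G_2\cup\Theta_1,x-\phi(n))$, and the factor $(x-n+\phi(n))$ cancels against the one coming from $\mu(K_{\phi(n)},x-n+\phi(n))$, leaving
$$\mu(\Gamma(\mathbb{Z}_n),x)=x(x-n)\cdot(x-n)^{\phi(n)-1}\cdot\mu(G_2,x-\phi(n))=x(x-n)^{\phi(n)}\,\mu(G_2,x-\phi(n)),$$
which is the claim. The only genuinely delicate point is that the right-hand side of Theorem \ref{Thjoin} is written as a rational function whose apparent denominators always divide the numerator; I would add a sentence noting that the cancellations above are therefore legitimate as a polynomial identity, and (if desired) remarking that for $n>2$ we have $\phi(n)\ge 2$ so $K_{\phi(n)}$ is a bona fide complete graph, though the computation goes through verbatim even when $\phi(n)=1$. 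Beyond that, the argument is routine bookkeeping with no real obstacle.
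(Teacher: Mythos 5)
Your proposal is correct and follows essentially the same route as the paper: both feed the decomposition $\Gamma(\mathbb{Z}_n)\cong K_{\phi(n)}\lor(G_2\cup\Theta_1)$ into Theorem \ref{Thjoin}, use $\mu(K_m,y)=y(y-m)^{m-1}$ and Theorem \ref{Thunion} with $\mu(\Theta_1,x)=x$, and cancel the factors $(x-n+\phi(n))$ and $(x-\phi(n))$. Your added remarks on the legitimacy of the cancellations and the $\phi(n)=1$ case are slightly more careful than the paper's computation but do not change the argument.
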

	
\begin{proof}
		
Using Equation \ref{join} and Theorems  \ref{Thjoin} and  \ref{Thunion} we obtain,
\begin{equation}\label{main}
\begin{split}
\mu(\Gamma(\mathbb{Z}_n))=\mu(G_1\lor G_2')=\mu( K_{\phi(n)} \lor G_2')\\
=\frac{x(x-n)}{(x-\phi(n))\bigg(x-(n-\phi(n))\bigg)}\mu(K_{\phi(n)},x-(n-\phi(n))) \mu(G_2',x-\phi(n))\\
=\frac{x(x-n)}{(x-\phi(n))\bigg(x-(n-\phi(n))\bigg)} \bigg (x-(n-\phi(n))\bigg)\bigg (x-n\bigg )^{\phi(n)-1} \mu(G_2',x-\phi(n))\\
=\frac{x(x-n)^{\phi(n)}}{(x-\phi(n))} \mu(G_2',x-\phi(n))
=x(x-n)^{\phi(n)}\mu(G_2,x-\phi(n)).
\end{split}
\end{equation}

\end{proof}
The following observation about $\mu(\Gamma(\mathbb{Z}_n))$ can be easily made

\begin{corollary}
If $n> 2$, then $n$ is an eigenvalue of $\Gamma(\mathbb Z_n)$ with multiplicity atleast $\phi(n)$.
\end{corollary}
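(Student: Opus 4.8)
The conclusion is essentially read off from Theorem \ref{Th1}. The plan is as follows. By Theorem \ref{Th1},
$$\mu(\Gamma(\mathbb{Z}_n)) = x\,(x-n)^{\phi(n)}\,\mu(G_2, x-\phi(n)),$$
where $G_2$ is the induced subgraph of $\Gamma(\mathbb{Z}_n)$ on $\mathcal{T}\setminus\{0\}$. The first point to record is that the right-hand side is a genuine product of polynomials in $x$: since $\mu(G_2,y)$ is the characteristic polynomial of $L(G_2)$ it is a (monic) polynomial in $y$, hence $\mu(G_2,x-\phi(n))$ is a polynomial in $x$. Therefore $(x-n)^{\phi(n)}$ divides $\mu(\Gamma(\mathbb{Z}_n))$, so $n$ is a root of the characteristic polynomial of $L(\Gamma(\mathbb{Z}_n))$ of algebraic multiplicity at least $\phi(n)$. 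Because $L(\Gamma(\mathbb{Z}_n))$ is real and symmetric, algebraic and geometric multiplicities coincide, and the claimed bound on the eigenvalue multiplicity follows.

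I would also point out that no cancellation can spoil this bound, and in fact the multiplicity is exactly $\phi(n)$. The extra factor $x$ vanishes only at $0\neq n$ since $n>2$, so it contributes nothing at $x=n$. As for $\mu(G_2,x-\phi(n))$, evaluating at $x=n$ gives $\mu\bigl(G_2,\,n-\phi(n)\bigr)$; since $G_2$ has $n-\phi(n)-1$ vertices, Theorem \ref{Th3} shows every Laplacian eigenvalue of $G_2$ is at most $n-\phi(n)-1$, hence strictly less than $n-\phi(n)$, so $\mu\bigl(G_2,\,n-\phi(n)\bigr)\neq 0$. (The degenerate case $n$ prime, where $G_2$ is the empty graph and $\mu(G_2,y)=1$, is covered too.) Thus $(x-n)$ does not divide $\mu(G_2,x-\phi(n))$, and the multiplicity of $n$ equals $\phi(n)$; the weaker ``at least'' statement of the corollary is of course all that is needed.

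There is no real obstacle here, as the statement is a direct corollary of Theorem \ref{Th1}. If one wanted an argument bypassing the characteristic polynomial, one could instead exhibit the eigenspace explicitly using Equation \ref{join}: writing $\Gamma(\mathbb{Z}_n)\cong K_{\phi(n)}\lor G_2'$ and invoking Lemma \ref{Lemma1}, every vector supported on $\mathcal{S}$ with zero coordinate sum is an eigenvector of $L(\Gamma(\mathbb{Z}_n))$ for the eigenvalue $n$, yielding multiplicity $\phi(n)-1$; the one remaining independent eigenvector is the vector that is constant on $\mathcal{S}$ and constant on $\mathcal{T}$ (with the two constants chosen so that the $\mathcal{T}$-equations are satisfied), and checking this last vector is the only mildly delicate step. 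Since that bookkeeping is subsumed by the clean factorization above, I would present the proof through Theorem \ref{Th1}.
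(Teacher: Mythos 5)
Your proof is correct and takes the same route the paper intends: the corollary is stated as an immediate consequence of the factorization $\mu(\Gamma(\mathbb{Z}_n))=x(x-n)^{\phi(n)}\mu(G_2,x-\phi(n))$ from Theorem \ref{Th1}, and reading the multiplicity off the factor $(x-n)^{\phi(n)}$ is exactly the argument. Your additional observation that the multiplicity is exactly $\phi(n)$ (via Theorem \ref{Th3} applied to $G_2$) is also valid and in fact anticipates, by a shorter argument, what the paper only establishes later as Theorem \ref{mult spectral}.
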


\begin{corollary}\label{prime}
	If $n=p$ where $p$ is a prime number,  then $p$ and $0$ are eigenvalues of $\Gamma(\mathbb Z_n)$ with multiplicity $p-1$ and $1$ respectively.
\end{corollary}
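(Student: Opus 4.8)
The plan is to specialize the structural description in Equation \ref{join} to the case $n=p$. First I would record that when $n=p$ is prime, every nonzero element of $\mathbb{Z}_p$ is a unit, so $\mathcal{S}=\{1,2,\ldots,p-1\}$ has $\phi(p)=p-1$ elements while $\mathcal{T}=\{0\}$ is a singleton. Consequently the graph $G_2$ appearing in Equation \ref{join}, which has $n-\phi(n)-1=0$ vertices, is the empty graph, and Equation \ref{join} collapses to $\Gamma(\mathbb{Z}_p)\cong K_{p-1}\lor\Theta_1\cong K_p$. In particular, combining Lemma \ref{Lemma1} (each unit is adjacent to everything) with Lemma \ref{Lemma2} (which for $n=p$ says $0$ is adjacent to all $p-1$ units) already forces $\Gamma(\mathbb{Z}_p)$ to be complete, so this identification is essentially immediate.

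Next I would invoke Theorem \ref{Th1} with $\phi(n)=p-1$. Since $G_2$ is the empty graph, its Laplacian characteristic polynomial is the empty product $\mu(G_2,x)=1$, so Theorem \ref{Th1} yields $\mu(\Gamma(\mathbb{Z}_p))=x(x-p)^{p-1}$, a polynomial of the correct degree $p$. Reading off the roots together with their multiplicities shows that $0$ is an eigenvalue of multiplicity $1$ and $p$ is an eigenvalue of multiplicity $p-1$, which is exactly the claim. Equivalently, one can bypass the polynomial formula and simply use $\Gamma(\mathbb{Z}_p)\cong K_p$, whose Laplacian is $pI_p-J_p$ with $J_p$ the all-ones matrix; the all-ones vector gives the eigenvalue $0$, and its orthogonal complement is a $(p-1)$-dimensional eigenspace for the eigenvalue $p$.

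I do not expect any genuine obstacle here, as the corollary is a direct specialization of Theorem \ref{Th1}. The only point deserving a word of justification is the convention $\mu(\text{empty graph},x)=1$, which is precisely what keeps Theorem \ref{Th1} meaningful in the degenerate situation $\phi(n)=n-1$; and even this is avoided altogether by the self-contained identification $\Gamma(\mathbb{Z}_p)\cong K_p$ described above, after which the result is just the standard Laplacian spectrum of a complete graph.
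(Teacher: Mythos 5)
Your proof is correct and follows essentially the same route as the paper: observe that for $n=p$ the set $\mathcal{T}\setminus\{0\}$ is empty so $G_2$ is the empty graph, then specialize Theorem \ref{Th1} to obtain $\mu(\Gamma(\mathbb{Z}_p))=x(x-p)^{p-1}$ and read off the multiplicities. Your supplementary identification $\Gamma(\mathbb{Z}_p)\cong K_p$ with Laplacian $pI_p-J_p$ is a valid self-contained alternative, but the main argument coincides with the paper's.
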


\begin{proof}
When $n=p$ is a prime number, then $\phi(n)=n-1$ and hence the set $\mathcal{T}\setminus\{0\}$ is empty which implies that the graph $G_2$ is the \textit{empty} graph.
Thus using Equation \ref{main}, 
\begin{equation*}
	\mu(\Gamma(\mathbb Z_n))=x(x-n)^{n-1}=x(x-p)^{p-1}.
\end{equation*}
\end{proof}

From Equation \ref{main} of Theorem \ref{Th1} we find that the eigenvalues of $\Gamma(\mathbb{Z}_n)$ are known if the spectra of the graph $G_2$ given in Equation \ref{join}  is completely determined.
We thus proceed to study the graph $G_2$ in more detail.
\subsection{Structure of $G_2$}
Let $n=p_1^{\alpha_1}p_2^{\alpha_2}\cdots p_k^{\alpha_k}$ be a prime factorization of $n$ where $p_1<p_2<\dots<p_k$ are primes and $\alpha_i$ are positive integers.
\\
The total number of positive divisors of $n$ is given by $\sigma(n)=(\alpha_1+1)(\alpha_2+1)\cdots (\alpha_k+1)$.
The total number of \textit{proper} positive divisors of $n$  will be given by $\sigma(n)-2$.
We denote $\sigma(n)-2$ by $w$.
\\
\\
Let $d_1<d_2<\cdots<d_w$ be the set of all  proper divisors of $n$ arranged in increasing order.
For each $d_i$ where $1\le i\le w$ we define 
\begin{align}\label{divisor}
A_{d_i}=\{x:\gcd(x,n)=d_i\}.
\end{align}

Any element of $A_{d_i}$ is of the form $zd_i$ where $\gcd(z,\frac{n}{d_i})=1$ and hence number of elements of $A_{d_i}$ is $\phi(\frac{n}{d_i})$.
Thus $|A_{d_i}|=\phi(\frac{n}{d_i})$.
Clearly $V(G_2)=\cup_{i=1}^w A_{d_i}$.
\begin{lemma}
	\label{Adjacency}
	
$x_i\in A_{d_i}$ is adjacent to $x_j\in A_{d_j}$ if and only if $\gcd(d_i,d_j)=1$.	
	
\end{lemma}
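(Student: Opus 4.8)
The plan is to reduce the adjacency condition \eqref{criterion} for $\Gamma(\mathbb{Z}_n)$, which is stated in terms of ideals of $\mathbb{Z}_n$, to an elementary number-theoretic statement about greatest common divisors. First I would recall that since $\mathbb{Z}_n$ is a principal ideal ring, the ideal $\langle x \rangle$ generated by an element $x$ satisfies $\langle x \rangle = \langle \gcd(x,n) \rangle$; in particular, for $x_i \in A_{d_i}$ we have $\gcd(x_i,n) = d_i$, so $\langle x_i \rangle = \langle d_i \rangle$, and likewise $\langle x_j \rangle = \langle d_j \rangle$. Thus the adjacency criterion \eqref{criterion} becomes $\langle d_i \rangle + \langle d_j \rangle = \mathbb{Z}_n$.

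Next I would identify the sum of these two ideals. In a principal ideal ring, $\langle a \rangle + \langle b \rangle = \langle \gcd(a,b) \rangle$; applied here (working modulo $n$), $\langle d_i \rangle + \langle d_j \rangle = \langle \gcd(d_i,d_j) \rangle$. Hence $x_i \sim x_j$ in $\Gamma(\mathbb{Z}_n)$ if and only if $\langle \gcd(d_i,d_j) \rangle = \mathbb{Z}_n$, which holds precisely when $\gcd(d_i,d_j)$ is a unit in $\mathbb{Z}_n$, i.e.\ when $\gcd\big(\gcd(d_i,d_j),\, n\big) = 1$. Since both $d_i$ and $d_j$ divide $n$, their gcd also divides $n$, so $\gcd\big(\gcd(d_i,d_j),n\big) = \gcd(d_i,d_j)$, and the condition collapses to $\gcd(d_i,d_j) = 1$. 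This gives both directions of the equivalence at once.

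The argument is essentially a chain of standard identifications, so there is no single hard step; the only point that needs a little care is the translation between ideal-theoretic and integer-theoretic gcd's inside the quotient ring $\mathbb{Z}_n$ — specifically, making sure that $\langle a \rangle + \langle b \rangle = \langle \gcd(a,b)\rangle$ is invoked correctly for ideals of $\mathbb{Z}_n$ rather than of $\mathbb{Z}$, and that the representatives $x_i, x_j$ may be replaced by $d_i, d_j$ without loss of generality. I would phrase the proof so that this reduction is explicit. One should also note in passing that the statement is independent of the choice of representatives $x_i \in A_{d_i}$ and $x_j \in A_{d_j}$, which is exactly what makes $G_2$ amenable to the equitable-partition and $H$-join machinery (Definition \ref{Def1}, Theorem \ref{spectrajoin}) in the sections that follow.
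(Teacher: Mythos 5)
Your argument is correct, and it takes a genuinely different (and cleaner) route than the paper's. The paper works directly with the integer representatives: it splits the forward direction into two cases according to whether $\gcd(x_i,x_j)$ equals $1$ or is a unit of $\mathbb{Z}_n$, and in each case chases divisibility relations to force $\gcd(d_i,d_j)=1$; the converse is a short proof by contradiction showing that any common divisor of $x_i$, $x_j$ dividing $n$ must divide $\gcd(d_i,d_j)$. You instead collapse the whole problem at the level of ideals, using $\langle x_i\rangle=\langle\gcd(x_i,n)\rangle=\langle d_i\rangle$ in $\mathbb{Z}_n$ and then $\langle d_i\rangle+\langle d_j\rangle=\langle\gcd(d_i,d_j)\rangle$ (valid here because $\gcd(d_i,d_j,n)=\gcd(d_i,d_j)$ when $d_i,d_j\mid n$), so that adjacency is immediately equivalent to $\gcd(d_i,d_j)=1$. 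Your reduction eliminates the case analysis entirely, proves both directions in one stroke, and makes the independence of the choice of representatives transparent, which is exactly the property the paper later needs for the equitable partition; the paper's version buys only a more elementary presentation that avoids invoking the ideal identity $\langle a\rangle+\langle b\rangle=\langle\gcd(a,b)\rangle$ in the quotient ring. One small correction in your favor: you correctly call $\mathbb{Z}_n$ a principal ideal ring, whereas the paper loosely calls it a principal ideal domain, which it is not for composite $n$.
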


\begin{proof}
	
Assume that $x_i\in A_{d_i}$ is adjacent to $x_j\in A_{d_j}$.
Using Equation \ref{criterion}, $x_i$ adjacent to $x_j$ implies 
$\langle x_i \rangle +\langle x_j \rangle =\mathbb{Z}_n$ which in turn implies that either $\gcd(x_i,x_j)=1$ or $\gcd(x_i,x_j)$ is an unit in $\mathbb{Z}_n$.
We consider the following two cases:
\begin{itemize}
	\item [Case 1:] $\gcd(x_i,x_j)=1$
	\\
	Let $\gcd(d_i,d_j)=d$, then $d\vert d_i, d\vert d_j$.
	Since $\gcd(x_i,n)=d_i$ and $\gcd(x_j,n)=d_j$, we have $d_i\vert x_i$ and $d_j\vert x_j$ which in turn implies that $d\vert x_i$ and $d\vert x_j$.
	Again since $\gcd(x_i,x_j)=1$,  $d=1$ and hence  $\gcd(d_i,d_j)=1$.	
	
	\item[Case 2:] $\gcd(x_i,x_j)$ is an unit in $\mathbb{Z}_n$.
	\\
	Let $\gcd(x_i,x_j)=a$ which is an unit in $\mathbb{Z}_n$ and hence $\gcd(a,n)=1$.
	Let $\gcd(d_i,d_j)=d$, then $d\vert d_i, d\vert d_j$.
	Since $\gcd(x_i,n)=d_i$ and $\gcd(x_j,n)=d_j$, we have $d_i\vert x_i, d_i
	\vert n$ and $d_j\vert x_j, d_j\vert n$ which in turn implies that $d\vert x_i$, $d\vert x_j$ and $d\vert n$.
	Since $\gcd(x_i,x_j)=a$, $d\vert a$.
	Since $\gcd(a,n)=1$, from the facts that $d\vert a$ and $d\vert n$ it follows that  $d=1$. Hence  $\gcd(d_i,d_j)=1$.	
	
\end{itemize}
Thus if $x_i\in A_{d_i}$ is adjacent to $x_j\in A_{d_j}$, then $\gcd(d_i,d_j)=1$.
\\
\\
Conversely, we now assume that $\gcd(d_i,d_j)=1$. 
Let $d=\gcd(x_i,x_j)$. We claim that either $d=1$ or $d$ is an unit in $\mathbb{Z}_n$.
Assume the contrary, then $d>1$ and $d$ is not an unit in $\mathbb{Z}_n$ which implies $d(>1)$ divides $n$. 
\begin{align*}
\begin{split}
\text{ If } d=\gcd(x_i,x_j) \text{ then } d\vert x_i, d\vert x_j
\\
\implies (d \vert x_i, d\vert n) \text{ and } (d\vert x_j, d\vert n)
\implies d \vert d_i=\gcd(x_i,n) \text{ and } d \vert d_j=\gcd(x_j,n)  
\\
\implies d\vert \gcd(d_i,d_j)=1 \text{ which is a contradiction }
\end{split}
\end{align*}
Thus either $d=1$ or $d$ is an unit in $\mathbb{Z}_n$ and hence $\langle x_i\rangle +\langle x_j\rangle =\langle d\rangle=\mathbb{Z}_n$ which implies by Equation \ref{criterion} that $x_i\in A_{d_i}$ is adjacent to $x_j$ in $A_{d_j}$.
\end{proof}

\begin{lemma}\label{Lem1}
	If $v_i\in A_{d_i}$ is adjacent to $v_j\in A_{d_j}$ for some $i\neq j$, then $v_i$ is adjacent to $v_j$ for all $v_j\in A_{d_j}$.  
\end{lemma}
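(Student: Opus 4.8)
The statement is an immediate consequence of Lemma \ref{Adjacency}, whose content is precisely that adjacency between a vertex of $A_{d_i}$ and a vertex of $A_{d_j}$ does not depend on the particular vertices chosen, but only on the arithmetic relation between the divisors $d_i$ and $d_j$. The plan is therefore to transfer the hypothesis through Lemma \ref{Adjacency} to the level of divisors, and then transfer back.

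First I would note that since $i\neq j$ we have $d_i\neq d_j$, so any $v_i\in A_{d_i}$ and any $v_j\in A_{d_j}$ are automatically distinct vertices of $G_2$, and the notion of adjacency between them is meaningful. Next, applying Lemma \ref{Adjacency} to the hypothesis that $v_i\in A_{d_i}$ is adjacent to some particular $v_j\in A_{d_j}$, I would conclude that $\gcd(d_i,d_j)=1$. Finally, since this condition $\gcd(d_i,d_j)=1$ refers only to the divisors and not to any specific vertex, I would apply the converse direction of Lemma \ref{Adjacency} once more: for every $v_j'\in A_{d_j}$, the equality $\gcd(d_i,d_j)=1$ forces $v_i$ to be adjacent to $v_j'$. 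This gives the claim.

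There is essentially no obstacle here beyond correctly invoking Lemma \ref{Adjacency} in both directions; all the real work (the two-case argument on $\gcd(x_i,x_j)$ and its converse) has already been carried out in the proof of that lemma. The only subtlety worth flagging is the role of the hypothesis $i\neq j$: without it one would be asserting that all vertices of a single class $A_{d_i}$ are mutually adjacent, which is false since $\gcd(d_i,d_i)=d_i>1$; the hypothesis $i\neq j$ is exactly what is needed to keep us in the regime where Lemma \ref{Adjacency} can produce a nontrivial conclusion.

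\begin{remark}
Combined with Lemma \ref{Adjacency}, this shows that $G_2$ is obtained from the graphs induced on the classes $A_{d_1},\dots,A_{d_w}$ by joining two such classes completely whenever the corresponding divisors are coprime; i.e. $G_2$ is an $H$-join in the sense of Definition \ref{generalizedcomposition}, where $V(H)=\{d_1,\dots,d_w\}$ and $d_i\sim d_j$ in $H$ iff $\gcd(d_i,d_j)=1$. This is the structural fact that Theorem \ref{spectrajoin} will be applied to.
\end{remark}
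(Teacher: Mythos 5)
Your proof is correct and follows exactly the same route as the paper's: use Lemma \ref{Adjacency} on the given adjacent pair to deduce $\gcd(d_i,d_j)=1$, then apply the converse direction of that lemma to every other vertex of $A_{d_j}$. The remark about the $H$-join structure matches the paper's subsequent development as well.
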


\begin{proof}
	Let $v_i\in A_{d_i}$ is adjacent to $v_j\in A_{d_j}$ for some $i\neq j$, then using Lemma \ref{Adjacency} $\gcd(d_i,d_j)=1$.
	Let $v_j'\neq v_j$ be another member of $V_j$, then $\gcd(v_j',n)=d_j$. Using the fact that $\gcd(d_i,d_j)=1$ and Lemma \ref{Adjacency}, we conclude that $v_j'$ is adjacent to $v_i$.
\end{proof}

\begin{lemma}\label{Lem2}
	No two members of the set $A_{d_i}$ are adjacent. 
\end{lemma}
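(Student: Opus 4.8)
The plan is to reduce the claim directly to the adjacency criterion of Equation \ref{criterion}, using only the elementary fact that a \emph{proper} divisor $d_i$ of $n$ satisfies $1 < d_i < n$. If $|A_{d_i}|\le 1$ there is nothing to prove, so I would fix two distinct vertices $x,y\in A_{d_i}$. By the definition of $A_{d_i}$ in Equation \ref{divisor} we have $\gcd(x,n)=\gcd(y,n)=d_i$; in particular $d_i\mid x$ and $d_i\mid y$.

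Next I would compute the relevant ideal sum inside $\mathbb{Z}_n$. Since $\mathbb{Z}_n$ is a principal ideal ring, $\langle x\rangle=\langle\gcd(x,n)\rangle=\langle d_i\rangle$, and likewise $\langle y\rangle=\langle d_i\rangle$, so that $\langle x\rangle+\langle y\rangle=\langle d_i\rangle$. Because $d_i$ is a proper divisor of $n$, we have $d_i\neq 1$ and $d_i\mid n$, hence $1\notin\langle d_i\rangle$ and therefore $\langle d_i\rangle\subsetneq\mathbb{Z}_n$. Consequently $\langle x\rangle+\langle y\rangle\neq\mathbb{Z}_n$, and by Equation \ref{criterion} the vertices $x$ and $y$ are not adjacent in $\Gamma(\mathbb{Z}_n)$; since $x,y\in A_{d_i}$ were arbitrary and distinct, no two members of $A_{d_i}$ are adjacent.

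Alternatively, one can phrase this as an immediate consequence of Lemma \ref{Adjacency} applied with $d_j=d_i$: were two members of $A_{d_i}$ adjacent, that lemma would force $\gcd(d_i,d_i)=d_i=1$, contradicting that $d_i$ is a proper divisor. I do not anticipate any genuine obstacle; the only minor point worth noting is the degenerate case $|A_{d_i}|=\phi(n/d_i)=1$ (which does occur, e.g.\ $d_i=n/2$ when $n$ is even), where the statement holds vacuously. This lemma, together with Lemmas \ref{Adjacency} and \ref{Lem1}, is what will let us present $G_2$ as an $H$-join of null graphs in the sequel.
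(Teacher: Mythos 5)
Your proposal is correct, and your ``alternative'' argument is exactly the paper's proof: the author simply notes that $v_i,v_j\in A_{d_i}$ forces $\gcd(d_i,d_i)=d_i\neq 1$ and invokes Lemma \ref{Adjacency}. Your primary ideal-theoretic computation ($\langle x\rangle+\langle y\rangle=\langle d_i\rangle\subsetneq\mathbb{Z}_n$) is a sound self-contained variant, but it only re-derives the special case of Lemma \ref{Adjacency} that the one-line citation already covers.
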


\begin{proof}
If $v_i, v_j\in A_{d_i}$ then $\gcd(v_i,n)=\gcd(v_j,n)=d_i$.
Using Lemma \ref{Adjacency}, the proof follows.

\end{proof}

If $v_i\in A_{d_i}$, using Lemma \ref{Lem1}  we observe that the number of neighbors of $v_i$ in $A_{d_j}$ where $j\neq i$ is fixed, i.e.  either the number of neighbors of $v_i$ in $A_{d_j}$ equals $0$ or $|A_{d_j}|$.
Also using Lemma \ref{Lem2}, the number of neighbors of $v_i$ in $A_{d_i}$ equals $0$ for all $1\le i\le w$.
If we denote $V_i=A_{d_i}$ where $1\le i\le w$, then using Definition \ref{Def1} we find that $V_1\cup V_2\cup \dots \cup V_w$  is an \textit{equitable partition}  of the graph $G_2$.
\\
Thus we have the following theorem;
\begin{theorem}\label{Equitable Partition}
For any $n\ge 2$, the induced subgraph $G_2$ of $\Gamma(\mathbb Z_n)$ with vertex set $V(G_2)$ has an  equitable partition  as $V(G_2)=\cup_{i=1}^w A_{d_i}$, where $w$ denotes the total number of positive proper divisors of $n$ and the sets $A_{d_i}$ have been defined as in Equation \ref{divisor}.

\end{theorem}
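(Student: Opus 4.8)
The plan is to verify directly that the partition $V(G_2)=\bigcup_{i=1}^{w}A_{d_i}$ meets the requirement of Definition \ref{Def1}: for every pair of indices $i,j$ the number $b_{ij}$ of neighbours that a vertex $v\in A_{d_i}$ has inside $A_{d_j}$ must be independent of the choice of $v$. First I would record that this really is a partition. Every vertex $x$ of $G_2$ lies in $\mathcal{T}\setminus\{0\}$, so $\gcd(x,n)$ is a proper divisor of $n$, whence $x$ belongs to exactly one of the classes $A_{d_1},\dots,A_{d_w}$; moreover $|A_{d_i}|=\phi(n/d_i)$, as already noted above. (When $n$ is prime we have $w=0$ and $G_2$ is the empty graph, so the statement is vacuous; these degenerate cases require no further comment.)

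Next I would split into the diagonal and off-diagonal cases. For $i=j$, Lemma \ref{Lem2} says that no two members of $A_{d_i}$ are adjacent, so $b_{ii}=0$ for every vertex of $A_{d_i}$. For $i\neq j$, Lemma \ref{Adjacency} shows that whether a vertex of $A_{d_i}$ is adjacent to a vertex of $A_{d_j}$ depends only on the single condition $\gcd(d_i,d_j)=1$, and not on the particular vertices chosen. Hence if $\gcd(d_i,d_j)=1$ then every vertex of $A_{d_i}$ is adjacent to all $\phi(n/d_j)$ vertices of $A_{d_j}$, giving $b_{ij}=|A_{d_j}|=\phi(n/d_j)$; while if $\gcd(d_i,d_j)>1$ then no vertex of $A_{d_i}$ has a neighbour in $A_{d_j}$, giving $b_{ij}=0$. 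In either subcase $b_{ij}$ is a constant depending only on $i$ and $j$, which is exactly what Definition \ref{Def1} demands. Lemma \ref{Lem1} can be cited as an alternative route to the same conclusion: adjacency of one vertex of $A_{d_i}$ to one vertex of $A_{d_j}$ already forces adjacency to the whole of $A_{d_j}$.

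There is essentially no hard step here — the content is carried entirely by Lemmas \ref{Adjacency}, \ref{Lem1} and \ref{Lem2}, which were established via the reformulated adjacency criterion in Equation \ref{criterion}. The only points needing a modicum of care are checking that the classes $A_{d_i}$ genuinely cover $V(G_2)$ and are pairwise disjoint, and remembering to treat the diagonal blocks $i=j$ separately from the off-diagonal ones; both are routine. Thus the proof will consist of assembling these three lemmas into the case analysis above and reading off the definition.
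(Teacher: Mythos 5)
Your proposal is correct and follows essentially the same route as the paper, which also deduces the equitable-partition property from Lemmas \ref{Adjacency}, \ref{Lem1} and \ref{Lem2} in the paragraph preceding the theorem. If anything, you are slightly more careful than the paper in invoking Lemma \ref{Adjacency} to guarantee that $b_{ij}$ is the \emph{same} constant for every vertex of $A_{d_i}$ (not merely $0$ or $|A_{d_j}|$ for each vertex separately), and in checking that the $A_{d_i}$ genuinely partition $V(G_2)$.
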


\subsection{Laplacian Spectra of $\Gamma(\mathbb{Z}_n)$ for any $n> 2$}
\label{Any}
In this section we provide the procedure to find the eigenvalues of $\Gamma(\mathbb{Z}_n)$ for any $n> 2$.
Using Equation \ref{main} of Theorem \ref{Th1}, we find that determining the spectra of $\Gamma(\mathbb{Z}_n)$ for any $n>2$ boils down to finding the spectra of induced subgraph $G_2$ of $\Gamma(\mathbb{Z}_n)$.
\\
\\
Using Theorems \ref{Equitable Partition} and \ref{generalizedcomposition}, it is evident that $G_2$ is the $H$-join of the graphs $G_{d_i}$ where $G_{d_i}$ is the induced subgraph of $\Gamma(\mathbb Z_n)$ on $A_{d_i}$, and $H$ can be obtained as follows:
\paragraph{Construction of $H:$}
\label{para}
$V(H)=\{d_i:1\le i\le w \text{ where } d_i \text{ is a positive proper divisor of } n\}$.
The vertices $d_i,d_j$ are adjacent in $H$ if and only if $\gcd(d_i,d_j)=1$.
\\
Thus $E(H)=\{d_id_j: \gcd(d_i,d_j)=1\}.$
\\
\\
We use Theorem \ref{spectrajoin} to determine the spectra of $G_2$.
We find that $G_2$ is the $H$- join of $G_{d_i}$, where $G_{d_i}$ is a null graph on $\phi(\frac{n}{d_i})$ vertices.
Hence $\sigma(G_{d_i})=\{0\}.$
\\
Also, $$N_{H}(d_j)=\{d_i: \gcd(d_i,d_j)=1\}$$ and hence  
$$N_{d_j}=\sum_{d_i\in N_H(d_j)} n_i=\sum_{d_i:\gcd(d_i,d_j)=1} \phi(\frac{n}{d_i}).$$
Moreover, $n_{d_i}=\phi(\frac{n}{d_i})$, where $1\le i\le w$.

\begin{example}
	If $n=pqr$ where where $p,q,r$ are primes with $p<q<r$, then the proper positive divisors of $n$ are $p,q,r,pq,pr,qr$. 
	Using the construction of $H$  given above, we find that $G_2$ is the $H$ join of $G_p,G_q,G_r,G_{pq},G_{pr},G_{qr}$ where $H$ is given by(Figure \ref{H})
	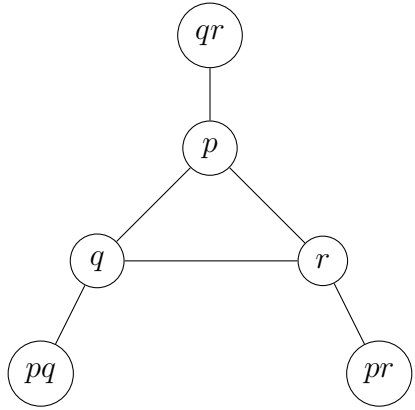
\begin{figure}[H]
		\centering
		\begin{tikzpicture}[scale=0.75]
		
		\node[shape=circle,draw=black] (1) at (0,0) {$p$};  
		\node[shape=circle,draw=black] (2) at (-2,-2)  {$q$}; 
		\node[shape=circle,draw=black] (3) at  (2,-2) {$r$};  
		\node[shape=circle,draw=black] (5) at  (-3,-4) {$pq$};  
		\node[shape=circle,draw=black] (6) at  (3,-4) {$pr$};  
		\node[shape=circle,draw=black] (4) at (0,2) {$qr$};

		\draw (1) -- (2); \draw (3) -- (2);\draw (1) -- (3);\draw (1) -- (4);\draw (5) -- (2);\draw (3) -- (6);
		\end{tikzpicture}
		\caption{$H$ for $n=pqr$}
		\label{H}
	\end{figure}
	
	Now we have,
	
	\begin{align}
	\label{N}
	\begin{split}
	N_p=\phi(\frac{pqr}{q})+\phi(\frac{pqr}{r})+\phi(\frac{pqr}{qr})
	\\
	=\phi(pr)+\phi(pq)+\phi(p)
	=(p-1)(r-1)+(p-1)(q-1)+(p-1)
	\\
	=(p-1)\{r-1+q-1+1\}=(p-1)(q+r-1).
	\\
	\text{ Similarly, }
	N_q=(q-1)(p+r-1),
	N_r=(r-1)(p+q-1)
	\\
	N_{pq}=(p-1)(q-1),
	N_{pr}=(p-1)(r-1) \text{ and } 	 N_{qr}=(q-1)(r-1)
	\end{split}
	\end{align}
	Also 
	\begin{equation}
	\label{n}
	\begin{split}
	n_p=(q-1)(r-1), n_q=(p-1)(r-1), n_r=(p-1)(q-1)
	\\
	n_{pq}=r-1, n_{pr}=q-1, n_{qr}=p-1.
	\end{split}
	\end{equation}
	
	Using Theorem \ref{spectrajoin} we find that the eigenvalues of $G_2$ are $(p-1)(q+r-1)$ with multiplicity $qr-r-q$, $(q-1)(p+r-1)$ with multiplicity $pr-r-p$, $(r-1)(p+q-1)$ with multiplicity $pq-p-q$, $(p-1)(q-1)$ with multiplicity $r-2$, $(p-1)(r-1)$ with multiplicity $q-2$ and $(q-1)(r-1)$ with multiplicity $p-2$ and remaining eigenvalues  are the eigenvalues of $6\times 6$ matrix $M$ (Equation \ref{matrix})whose entries can be determined from Equations \ref{N} and \ref{n}.
	
\end{example}

	Though the procedure given in Subsection \ref{Any} allow us to determine the spectra of $\Gamma(\mathbb{Z}_n)$ for any $n>2$, it is difficult to use it when number of divisors of $n$  is large as otherwise we would have to deal with large matrices which would make the calculations difficult.
\\
\\
In the next subsection we calculate the spectra of  $\Gamma(\mathbb Z_n)$ for some specific $n$ which in turn helps us observe some interesting facts about $\Gamma(\mathbb Z_n)$.
\subsection{$\Gamma(\mathbb Z_n)$ is Laplacian Integral for $n=p^\alpha q^\beta$}
\label{Integral}
In the next two theorems namely Theorem \ref{pm} and \ref{prod},  we find the spectra of  $\Gamma(\mathbb Z_n)$ for $n=p^\alpha q^\beta$ which in turn helps us conclude that $\Gamma(\mathbb Z_n)$ is \textit{Laplacian Integral} for $p,q$ are primes where $p<q$ and $\alpha, \beta$ are non-negative integers.

\begin{theorem}\label{pm}
	When $n=p^m$ where $p$ is a prime and $m>1$ is a positive integer, then the eigenvalues of $\Gamma(\mathbb Z_n)$ are $n$ with multiplicity $\phi(n)$, $\phi(n)$ with multiplicity $n-\phi(n)-1$ and $0$ with multiplicity $1$.
\end{theorem}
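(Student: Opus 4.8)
The plan is to identify the graph $G_2$ appearing in Equation \ref{join} when $n=p^m$ and then read off the spectrum from Theorem \ref{Th1}. First I would list the proper divisors of $n=p^m$: they are precisely $p,p^2,\dots,p^{m-1}$, so $w=m-1$ and $V(G_2)=\bigcup_{i=1}^{m-1}A_{p^i}$ with $|A_{p^i}|=\phi(p^{m-i})$ by the count established before Lemma \ref{Adjacency}. The crucial observation is that any two proper divisors $p^i,p^j$ of $p^m$ satisfy $\gcd(p^i,p^j)=p^{\min(i,j)}\ge p>1$; hence by Lemma \ref{Adjacency} no vertex of $A_{p^i}$ is adjacent to any vertex of $A_{p^j}$, and by Lemma \ref{Lem2} no two vertices inside a single $A_{p^i}$ are adjacent either. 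Therefore $G_2$ has no edges at all, i.e.\ $G_2\cong\Theta_{\,n-\phi(n)-1}$, the null graph on $n-\phi(n)-1=p^{m-1}-1$ vertices (which is nonempty precisely because $m>1$).

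Next I would compute $\mu(G_2,x)$. Since the Laplacian of a null graph is the zero matrix, $\mu(\Theta_r,x)=x^r$, so $\mu(G_2,x)=x^{\,n-\phi(n)-1}$ and consequently $\mu(G_2,x-\phi(n))=(x-\phi(n))^{\,n-\phi(n)-1}$. Substituting this into the formula of Theorem \ref{Th1} gives
\[
\mu(\Gamma(\mathbb{Z}_n),x)=x\,(x-n)^{\phi(n)}\,(x-\phi(n))^{\,n-\phi(n)-1}.
\]
Reading off the roots with multiplicity yields exactly the claimed spectrum: $n$ with multiplicity $\phi(n)$, $\phi(n)$ with multiplicity $n-\phi(n)-1$, and $0$ with multiplicity $1$. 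As a consistency check, these multiplicities sum to $\phi(n)+(n-\phi(n)-1)+1=n$, the order of $\Gamma(\mathbb{Z}_n)$.

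An equivalent route, should one prefer to bypass Theorem \ref{Th1}, is to note that the argument above shows $G_2'\cong\Theta_{\,n-\phi(n)}$, so $\Gamma(\mathbb{Z}_n)\cong K_{\phi(n)}\lor\Theta_{\,n-\phi(n)}$ is a complete split graph, and then apply Theorem \ref{Thjoin} to $G_1=K_{\phi(n)}$ and $G_2'=\Theta_{\,n-\phi(n)}$ using $\mu(K_{\phi(n)},x)=x(x-\phi(n))^{\phi(n)-1}$; the same polynomial drops out after cancellation. There is no serious obstacle here: the only point to get right is the elementary number-theoretic fact that every pair of proper divisors of a prime power shares the factor $p$, which is what collapses $G_2$ to a null graph, and everything after that is a direct substitution into results already proved in Section \ref{Sec3} and Section \ref{Sec2}.
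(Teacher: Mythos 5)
Your proof is correct and follows essentially the same route as the paper: you use the fact that any two proper divisors of $p^m$ share the factor $p$, invoke Lemmas \ref{Adjacency} and \ref{Lem2} to conclude $G_2\cong\Theta_{n-\phi(n)-1}$, and substitute into Theorem \ref{Th1}. The explicit computation $n-\phi(n)-1=p^{m-1}-1$ and the multiplicity sanity check are welcome additions not present in the paper's version.
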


\begin{proof}
	When $p$ is a prime and $m>1$ is a positive integer, the proper divisors of $p^m$ are \\
	$p, p^2, p^3, \ldots, p^{m-2}, p^{m-1}.$
	We partition the vertex set $V(G_2)$ of $G_2$ as $V_1,V_2,\ldots, V_{m-2}, V_{m-1}$\\ where $V_{i}=A_{p^i}=\{x:\gcd(x,n)=p^i\}$.
	\\
	\\
	Since $\gcd(p^i,p^j)=p^{\min\{i,j\}} \neq 1$,  using Lemmas \ref{Adjacency} and \ref{Lem2} we find that $x_i\in V_i$ is not adjacent to $x_j\in V_j$ for all $1\le i,j \le m-1$.
	\\
	Thus no two vertices in the graph $G_2$ are adjacent and hence $G_2=\Theta_{n-\phi(n)-1}$.
	Using Equation \ref{main} we obtain
	$ \mu(\Gamma(\mathbb Z_n))=x(x-n)^{\phi(n)}\bigg(x-\phi(n)\bigg)^{n-\phi(n)-1}.$

\end{proof}
\begin{theorem}\label{prod}
	
	If $n=p^\alpha q^\beta$ where $p,q$ are primes with $p<q$ and $\alpha, \beta$ are positive integers, then the eigenvalues of $\Gamma(\mathbb Z_n)$ are $n$ with multiplicity $\phi(n)$, $(t+1)(p-1)+\phi(n)$ with multiplicity $(t+1)(q-1)-1$, $(t+1)(q-1)+\phi(n)$ with multiplicity $(t+1)(p-1)-1$, $\phi(n)$ with multiplicity $t+1$ and $(t+1)(p+q-2)+\phi(n),0$ each with multiplicity $1$ where $t=p^{\alpha-1}q^{\beta-1}-1$. 
	
\end{theorem}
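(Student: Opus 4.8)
The plan is to apply the $H$-join machinery of Theorem \ref{spectrajoin} to the graph $G_2$, exactly as set up in Subsection \ref{Any}, and then feed the result into Equation \ref{main}. The proper divisors of $n = p^\alpha q^\beta$ come in three families: the pure $p$-powers $p, p^2, \dots, p^\alpha$; the pure $q$-powers $q, q^2, \dots, q^\beta$; and the mixed divisors $p^i q^j$ with $1 \le i \le \alpha$, $1 \le j \le \beta$ (excluding $p^\alpha q^\beta = n$ itself). First I would determine the adjacency structure of $H$: two divisors are adjacent iff they are coprime, so a pure $p$-power is adjacent only to pure $q$-powers and vice versa, and every mixed divisor is isolated in $H$ (it shares a factor with everything). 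Thus $H$ consists of a complete bipartite graph between the $p$-power side and the $q$-power side, together with $\alpha\beta - 1$ isolated vertices.

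Next I would compute the two ingredients Theorem \ref{spectrajoin} needs. For a pure $p$-power vertex $p^i$, its $H$-neighbourhood is all pure $q$-powers, so $N_{p^i} = \sum_{j=1}^{\beta} \phi(n/q^j)$; since $\phi(n/q^j) = \phi(p^\alpha)\phi(q^{\beta-j})$ for $j < \beta$ and $\phi(n/q^\beta) = \phi(p^\alpha)$, this telescopes to $\phi(p^\alpha)\cdot q^{\beta-1} = (p-1)p^{\alpha-1}q^{\beta-1} = (t+1)(p-1)$, independent of $i$. Symmetrically $N_{q^j} = (t+1)(q-1)$ for every $j$, and $N_{p^i q^j} = 0$ for the isolated mixed divisors. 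Each $G_{d_i}$ is a null graph, so $\sigma(G_{d_i}) = \{0\}$ and the contribution $\bigcup_j (N_j + \sigma(G_j)\setminus\{0\})$ gives: the value $(t+1)(p-1)$ with multiplicity $\sum_{i=1}^{\alpha}\big(n_{p^i} - 1\big)$, the value $(t+1)(q-1)$ with multiplicity $\sum_{j=1}^{\beta}\big(n_{q^j} - 1\big)$, and the value $0$ with multiplicity $\sum(n_{p^iq^j} - 1)$ over mixed divisors. Summing $n_{p^i} = \phi(n/p^i)$ over $i = 1,\dots,\alpha$ gives $(q-1)q^{\beta-1}p^{\alpha-1} = (t+1)(q-1)$ by the same telescoping, so the multiplicity of $(t+1)(p-1)$ before adding $\phi(n)$ is $(t+1)(q-1) - \alpha$; likewise $(t+1)(q-1)$ appears $(t+1)(p-1) - \beta$ times, and $0$ appears $(t+1) - \alpha\beta$ times from the mixed block (using $\sum_{i,j}\phi(n/p^iq^j)$ over all $\alpha\beta$ mixed divisors equals $p^{\alpha-1}q^{\beta-1} = t+1$). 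Then Equation \ref{main} shifts every $G_2$-eigenvalue by $\phi(n)$ and appends $n$ with multiplicity $\phi(n)$ and the eigenvalue $0$.

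The remaining eigenvalues are those of the quotient matrix $M$ from Equation \ref{matrix}, which is the only genuinely computational step. Because $M$ has a strong block structure — the $p$-power rows are mutually identical in their interaction pattern, as are the $q$-power rows, and the mixed-divisor rows are diagonal with entry $0$ — one can reduce $\sigma(M)$ by symmetry: the mixed rows immediately contribute the eigenvalue $0$ with multiplicity $\alpha\beta - 1$ more (beyond the one already in the null-graph block; together with the global kernel vector this will account for $0$ with total multiplicity $1$ in $\Gamma(\mathbb{Z}_n)$ after all cancellations, matching the statement), and within the $p$-power block all coordinates can be taken equal, collapsing $M$ to a small matrix. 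Concretely, vectors constant on the $p$-side, constant on the $q$-side, and zero on the mixed side reduce the relevant part of $M$ to a $2\times 2$ matrix whose trace is $N_p \cdot(\text{something}) $—I would compute its characteristic polynomial directly and check its two roots are $0$ (contributing the genuine Fiedler/kernel value after the $\phi(n)$ shift is undone) and $(t+1)(p+q-2)$, which after shifting by $\phi(n)$ yields the claimed eigenvalue $(t+1)(p+q-2)+\phi(n)$. Differences of basis vectors within the $p$-side (resp. $q$-side) are eigenvectors of $M$ with eigenvalue $N_p$ (resp. $N_q$), contributing $(t+1)(p-1)$ with multiplicity $\alpha - 1$ and $(t+1)(q-1)$ with multiplicity $\beta - 1$; these exactly restore the $-\alpha$ and $-\beta$ deficits from the previous paragraph, so the final multiplicities become $(t+1)(q-1)-1$ and $(t+1)(p-1)-1$ as stated (the extra $-1$ coming from the constant vectors being used up in the $2\times 2$ block). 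The main obstacle is bookkeeping: making sure the eigenvalue $0$ and the eigenvalue $\phi(n)$ (which appears both from the null-graph shifts and potentially from $M$) are counted with exactly the right multiplicity after all the shifts and the cancellations between the $M$-spectrum and the bulk spectrum — I would verify the total comes to $n$ by checking $\phi(n) + [(t+1)(q-1)-1] + [(t+1)(p-1)-1] + (t+1) + 1 + 1 = n$, which follows from $(t+1)(p-1) + (t+1)(q-1) + (t+1) = (t+1)(p+q-1) = n - \phi(n)$.
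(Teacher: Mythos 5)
Your proposal is correct in substance, but it takes a genuinely different route from the paper. You run the general $H$-join machinery of Theorem \ref{spectrajoin} on the full divisor partition $\{A_{p^i}\}\cup\{A_{q^j}\}\cup\{A_{p^iq^j}\}$ and then diagonalize the quotient matrix $M$; the paper instead observes directly that $G_2=(G_{21}\lor G_{22})\cup\Theta_t$ where $G_{21}$, $G_{22}$ are \emph{null} graphs on $(t+1)(q-1)$ and $(t+1)(p-1)$ vertices, so $G_{21}\lor G_{22}$ is just a complete bipartite graph whose Laplacian characteristic polynomial falls out of Theorems \ref{Thjoin} and \ref{Thunion} in one line, with no quotient matrix to analyze. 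Your structural analysis of $H$ (complete bipartite between pure $p$-powers and pure $q$-powers, all mixed divisors isolated) and your telescoping computations $N_{p^i}=(t+1)(p-1)$, $\sum_i n_{p^i}=(t+1)(q-1)$, etc.\ are all correct, and your final multiplicity bookkeeping checks out against $n-\phi(n)-1=(t+1)(p+q-1)-1$. One small inaccuracy: within the $p$-power block of $M$, the eigenvectors for the eigenvalue $N_p$ are not differences of standard basis vectors but the vectors supported on the $p$-side orthogonal to the weight vector $u=(\sqrt{n_{p^1}},\dots,\sqrt{n_{p^\alpha}})$, since the coupling to the $q$-side is the rank-one block $-uv^T$; the dimension count $\alpha-1$ (resp.\ $\beta-1$) and hence your conclusion are unaffected. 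The trade-off is clear: your approach is the ``general procedure'' of Subsection \ref{Any} pushed through by hand and would generalize to more prime factors, while the paper's join/union shortcut is shorter precisely because both sides of the bipartition happen to be null graphs when $n=p^\alpha q^\beta$.
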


\begin{proof}
	If $n=p^\alpha q^\beta$ where $p,q$ are primes with $p<q$ and $\alpha, \beta$ are positive integers, then the proper divisors of $n$ are $p^iq^j$ where $0\le i\le \alpha$, $0\le j\le \beta$ with $i+j\notin \{0,\alpha+\beta\}$.
	We partition the vertex set $V(G_2)$ as 
	\begin{align}
	\label{partition}
	\begin{split}
	V(G_2)=\bigg( A_{p}\cup A_{p^2}\cup \cdots \cup A_{p^\alpha}\bigg)\cup \bigg ( A_{q}\cup A_{q^2}\cup \cdots \cup A_{q^\beta}\bigg )
	\\
	\cup \bigg( \cup_{j=1}^\beta A_{pq^j} \bigg )\cup \bigg( \cup_{j=1}^\beta A_{p^2q^j}\bigg )\cup \cdots \cup\bigg (\cup_{j=1}^{\beta-1} A_{p^\alpha q^j}\bigg ).
	\end{split}
	\end{align}

	If $1\le i\le \alpha$, $1\le j\le \beta$, then $\gcd(p^i,q^j)=1$.
	Using Lemma \ref{Adjacency} and  \ref{Lem1} we find that  every vertex  of $A_{p^i}$ is adjacent to every  vertex of $A_{q^j}$.
	\\
	Also Lemma \ref{Adjacency} indicates that if  $1\le i\le \alpha$, $1\le j\le \beta$ with $i+j\neq \alpha+\beta$, then no vertex of $A_{p^i q^j}$ is adjacent to any other vertex of $G_2$.
	If we draw the graph $G_2$ with the vertex partitions as given in Equation \ref{partition}, it looks like the following (Figure \ref{Fig1})
	\begin{figure}[H]
		
		\centering
		\begin{tikzpicture}[Dotted/.style={
			line width=0.75pt,
			dash pattern=on 0.01\pgflinewidth off #1\pgflinewidth,line cap=round,
			shorten >=0.5em,shorten <=0.5em},
		Dotted/.default=5]
		\matrix[matrix of math nodes,nodes={circle,draw,minimum size=4.5em},
		column sep=4em,row sep=1ex](mat) {
			A_p & A_q \\
			A_{p^2} & A_{q^2} \\[2em]
			A_{p^\alpha} & A_{q^\beta} \\
		};
		\draw[Dotted] (mat-2-1) -- (mat-3-1);
		\draw[Dotted] (mat-2-2) -- (mat-3-2);
		\foreach \X in {1,2,3}
		{\foreach \Y in {1,2,3}
			{\draw (mat-\X-1) -- (mat-\Y-2);}}
		\matrix[matrix of math nodes,nodes={circle,draw,minimum size=2.5em},
		column sep=1em,row sep=1em,below=2em of mat,xshift=2em,
		column 2/.style={column sep=4.5em}](mat2) {
			A_{pq} & A_{p^2q} & A_{p^\alpha q}\\
			A_{pq^2} & A_{p^2q^2} & A_{p^\alpha q^2}\\[2em]
			A_{pq^\beta} & A_{p^2q^\beta} & A_{p^{\alpha-1} q^\beta}\\
		};
		\foreach \X in {1,2,3}
		{\draw[Dotted] (mat2-\X-2) -- (mat2-\X-3);
			\draw[Dotted] (mat2-2-\X) -- (mat2-3-\X);}
		\end{tikzpicture}

		\caption{$G_2$ for $n=p^\alpha q^\beta$}
		\label{Fig1}

	\end{figure}
	(A solid line in the figure  indicates that  each vertex of $A_{d_i}$ is adjacent to each vertex of $A_{d_j}$.
	No line between two nodes $A_{d_i}$ and $A_{d_j}$ indicates that no vertex of $A_{d_i}$ is adjacent to any vertex of $A_{d_j}.$
	).
	\\
	\\
	Let $G_{21}$ be the \textit{induced subgraph} of $G_2$ on the set  $ A_{p}\cup A_{p^2}\cup \cdots \cup A_{p^\alpha} $ and  $G_{22}$ be the \textit{induced subgraph} of $G_2$ on the set $ A_{q}\cup A_{q^2}\cup \cdots \cup A_{q^\beta}$.
	\\
	\\
	Now the number of elements in $ A_{p}\cup A_{p^2}\cup \cdots \cup A_{p^\alpha} $ is $\sum_{i=1}^\alpha |A_{p^i}|.$
	Hence
	\begin{equation}\label{calculation}
	\begin{split}
	\sum_{i=1}^\alpha |A_{p^i}|=|A_p|+|A_{p^2}|+\cdots+|A_{p^\alpha}|
	=\phi(p^{\alpha-1}q^\beta)+\phi(p^{\alpha-2}q^\beta)+\cdots+\phi(q^\beta)
	\\
	=p^{\alpha-1}(1-\frac{1}{p})q^\beta(1-\frac{1}{q})+p^{\alpha-2}(1-\frac{1}{p})q^\beta(1-\frac{1}{q})+\cdots\\
	+\cdots+p(1-\frac{1}{p})q^\beta(1-\frac{1}{q})+q^\beta(1-\frac{1}{q})
	\\
	=q^\beta(1-\frac{1}{q})(1-\frac{1}{p})\{p+p^2+\cdots+p^{\alpha-1}\}+q^\beta(1-\frac{1}{q})
	\\
	=q^\beta(1-\frac{1}{q})(1-\frac{1}{p})\bigg(\frac{p(p^{\alpha-1}-1)}{p-1}\bigg)+q^\beta(1-\frac{1}{q})\\
	=q^\beta(1-\frac{1}{q})(p^{\alpha-1}-1+1)
	=q^{\beta-1}p^{\alpha-1}(q-1).
	\end{split}
	\end{equation}
	
	Again the number of elements in the set $ A_{q}\cup A_{q^2}\cup \cdots \cup A_{q^\beta}$ is $\sum_{i=1}^\beta |A_{q^i}|.$
	Using similar calculations as in Equation \ref{calculation}, we find that 
	\begin{align}\label{cal}
	\sum_{i=1}^\beta |A_{q^i}|=p^{\alpha-1}q^{\beta-1}(p-1).
	\end{align}

	The  vertices of $G_2$ which are not adjacent to any other vertex in $G_2$ are the members of the set
	\\
	$ \bigg( \cup_{j=1}^\beta A_{pq^j} \bigg )\cup \bigg( \cup_{j=1}^\beta A_{p^2q^j}\bigg )\cup \cdots \cup\bigg (\cup_{j=1}^{\beta-1} A_{p^\alpha q^j}\bigg )$. 
	Using Equations \ref{calculation} and \ref{cal}, the number of such vertices denoted by $t$ equals
	\begin{align*}
	\begin{split}
	t=p^\alpha q^\beta- \phi(p^\alpha q^\beta)-1-p^{\alpha-1} q^{\beta-1}(p-1)-p^{\alpha-1} q^{\beta-1}(q-1)
	\\
	=p^\alpha q^\beta-1-p^{\alpha-1} q^{\beta-1}\{(p-1)(q-1)+p-1+q-1\}
	\\
	=p^\alpha q^\beta-1-p^{\alpha-1} q^{\beta-1}(pq-1)
	=p^{\alpha-1} q^{\beta-1}-1.
	\end{split}
	\end{align*}
	Clearly the \textit{induced subgraph} of $G_2$ on $p^{\alpha-1} q^{\beta-1}-1$ vertices is a null graph.
	\\
	\\
	\\
	Since every vertex of the graph $G_{21}$ is adjacent to every vertex of the graph $G_{22}$ and the remaining vertices of $G_2$ are not adjacent to any other vertex, the following is evident
	\begin{equation}\label{Claim}
	G_2=(G_{21}\lor G_{22}) \cup \Theta_t.
	\end{equation}
	
	Using Equations \ref{calculation} and \ref{cal} and theorem \ref{Thjoin} we obtain
	\begin{equation}\label{T1}
	\begin{split}
	\mu((G_{21}\lor G_{22}),x)
	=x\bigg(x-p^{\alpha-1}q^{\beta-1}(q-1)\bigg)^{p^{\alpha-1}q^{\beta-1}(p-1)-1}\\
	\times\bigg(x-p^{\alpha-1}q^{\beta-1}(p-1)\bigg)^{p^{\alpha-1}q^{\beta-1}(q-1)-1}
	\bigg(x-(p^{\alpha-1}q^{\beta-1}(p+q-2))\bigg).
	\end{split}
	\end{equation}
	
	Using Theorem \ref{Thunion}, Equation \ref{Claim}  and Equation \ref{T1} we obtain,
	
	\begin{equation}\label{T2}
	\begin{split}
	\mu(G_2,x)=x^t\times\mu((G_{21}\lor G_{22}),x)
	=x^{t+1}\bigg(x-p^{\alpha-1}q^{\beta-1}(q-1)\bigg)^{p^{\alpha-1}q^{\beta-1}(p-1)-1} 
	\\
	\times\bigg(x-p^{\alpha-1}q^{\beta-1}(p-1)\bigg)^{p^{\alpha-1}q^{\beta-1}(q-1)-1}
	\bigg(x-(p^{\alpha-1}q^{\beta-1}(p+q-2))\bigg).
	\end{split}
	\end{equation}
	
	Using Equation \ref{T2} in Equation \ref{main} we have 
	
	\begin{equation*}
	\begin{split}
	\mu(\Gamma(\mathbb{Z}_n),x)=x(x-n)^{\phi(n)}\mu\bigg (G_2,x-\phi(n)\bigg)
	\\
	=x(x-n)^{\phi(n)}\bigg(x-\phi(n)\bigg)^{t+1}\bigg(x-p^{\alpha-1}q^{\beta-1}(q-1)-\phi(n)\bigg)^{p^{\alpha-1}q^{\beta-1}(p-1)-1}\\
	\times\bigg(x-p^{\alpha-1}q^{\beta-1}(p-1)-\phi(n)\bigg)^{p^{\alpha-1}q^{\beta-1}(q-1)-1}
	\bigg(x-(p^{\alpha-1}q^{\beta-1}(p+q-2)-\phi(n))\bigg).
	\end{split}
	\end{equation*}
	
	Thus the eigenvalues of $\Gamma(\mathbb Z_n)$ are $n$ with multiplicity $\phi(n)$, $(t+1)(p-1)+\phi(n)$ with multiplicity $(t+1)(q-1)-1$, $(t+1)(q-1)+\phi(n)$ with multiplicity $(t+1)(p-1)-1$, $\phi(n)$ with multiplicity $t+1$ and $(t+1)(p+q-2)+\phi(n),0$ each with multiplicity $1$.
	
\end{proof}

Using Corollary \ref{prime} and Theorems  \ref{pm} and \ref{prod}  the following is evident,

\begin{theorem}\label{integral}
	If   $n=p^\alpha q^\beta$ where $p,q$ are primes and $ \alpha, \beta$ are non-negative integers, then $\Gamma(\mathbb Z_n)$ is Laplacian Integral.
\end{theorem}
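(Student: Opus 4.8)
The plan is a straightforward case analysis on the exponents $\alpha$ and $\beta$, in each case invoking the explicit eigenvalue descriptions already obtained and observing that every eigenvalue that appears is an integer. First I would dispose of the degenerate situations. Since $n>2$, we cannot have $\alpha=\beta=0$, as that would force $n=1$. If $p=q$, then $n=p^{\alpha+\beta}$ is a prime power, so after possibly relabelling the two primes we may assume $p\neq q$ and, by symmetry, $p<q$.

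Next I would handle the prime-power case. If exactly one of $\alpha,\beta$ is zero, say $\beta=0$, then $n=p^{\alpha}$ with $\alpha\ge 1$. When $\alpha=1$ we have $n=p$ prime, and Corollary \ref{prime} gives that the eigenvalues of $\Gamma(\mathbb{Z}_n)$ are $p$ with multiplicity $p-1$ and $0$ with multiplicity $1$; when $\alpha>1$, Theorem \ref{pm} gives that the eigenvalues are $n$ with multiplicity $\phi(n)$, $\phi(n)$ with multiplicity $n-\phi(n)-1$, and $0$ with multiplicity $1$. In either subcase all eigenvalues are integers, since $n$, $p$, and $\phi(n)$ are integers.

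Finally, the main case is $\alpha\ge 1$ and $\beta\ge 1$ with $p<q$, where Theorem \ref{prod} applies and lists the eigenvalues of $\Gamma(\mathbb{Z}_n)$ as $n$, $(t+1)(p-1)+\phi(n)$, $(t+1)(q-1)+\phi(n)$, $\phi(n)$, $(t+1)(p+q-2)+\phi(n)$, and $0$, where $t=p^{\alpha-1}q^{\beta-1}-1$. Since $t$, $\phi(n)$, $p$, $q$, and $n$ are all integers, each of these eigenvalues is an integer. Combining the three cases, every Laplacian eigenvalue of $\Gamma(\mathbb{Z}_n)$ is an integer, so $\Gamma(\mathbb{Z}_n)$ is Laplacian Integral.

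I do not expect a genuine obstacle here; the only point requiring care is the bookkeeping of the degenerate cases — ensuring that $p=q$ and the vanishing of one exponent are each folded into one of the three previously established results, and that the hypothesis $n>2$ is invoked precisely to rule out $n=1$. Beyond that the argument is immediate, because the cited theorems already express the entire spectrum explicitly in terms of the integer quantities $p$, $q$, $\alpha$, $\beta$, $\phi(n)$, and $t$.
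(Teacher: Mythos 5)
Your proposal is correct and matches the paper's proof, which likewise derives the theorem immediately from Corollary \ref{prime} and Theorems \ref{pm} and \ref{prod} by observing that all the listed eigenvalues are integers. Your explicit handling of the degenerate cases ($p=q$, a vanishing exponent, $n>2$) is just a more careful write-up of the same argument.
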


\section{Algebraic Connectivity and Vertex Connectivity of $\Gamma(\mathbb{Z}_n)$}
\label{Sec4}

In this section we investigate the algebraic connectivity($\lambda_{n-1}$) and vertex connectivity($\kappa$) of $\Gamma(\mathbb Z_{n})$  for any $n>2$.
We also show that $\lambda_{n-1}$ and $\kappa$ are equal for any $n>2$.

\begin{lemma}\label{AC}
	If $n> 2$, then $\phi(n)$ is an eigenvalue of $\Gamma(\mathbb Z_n)$ with multiplicity atleast $1$.
\end{lemma}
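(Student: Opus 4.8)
The plan is to read $\phi(n)$ off the factored characteristic polynomial supplied by Theorem \ref{Th1}, which states $\mu(\Gamma(\mathbb{Z}_n))=x(x-n)^{\phi(n)}\mu(G_2,x-\phi(n))$. From this factorization the eigenvalues of $\Gamma(\mathbb{Z}_n)$ are $0$, the number $n$ with multiplicity $\phi(n)$, and the shifted values $\lambda+\phi(n)$ for $\lambda\in\sigma(G_2)$. Hence it suffices to show that $0$ is an eigenvalue of $L(G_2)$, for then $\phi(n)=0+\phi(n)$ belongs to $\sigma(\Gamma(\mathbb{Z}_n))$.

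First I would check that $G_2$ is a nonempty graph. By the decomposition (\ref{join}), $G_2$ is the induced subgraph of $\Gamma(\mathbb{Z}_n)$ on $\mathcal{T}\setminus\{0\}$, which has $n-\phi(n)-1$ vertices; since $n-\phi(n)=|\mathcal{T}|\ge 2$ for every composite $n$ (any prime divisor $p<n$ of $n$ lies in $\mathcal{T}$ and is distinct from $0$), the graph $G_2$ has at least one vertex. As recalled in the Introduction, the Laplacian of any graph with at least one vertex has $0$ as an eigenvalue with eigenvector $[1,1,\ldots,1]^T$, so $0\in\sigma(G_2)$. Substituting this into the displayed identity shows that $x-\phi(n)$ divides $\mu(G_2,x-\phi(n))$ and therefore divides $\mu(\Gamma(\mathbb{Z}_n))$; since $0<\phi(n)<n$ for $n>1$, this factor is not absorbed by $x$ or by $(x-n)^{\phi(n)}$, so $\phi(n)\in\sigma(\Gamma(\mathbb{Z}_n))$ with multiplicity at least $1$. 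In fact the same argument pins the multiplicity to the number of connected components of $G_2$.

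There is no real obstacle: the conclusion is essentially immediate from the structural formula of Theorem \ref{Th1}, the only thing worth checking being that $G_2$ actually has a vertex. This last point is automatic for composite $n$; for $n=p$ prime one has $\mathcal{T}=\{0\}$, the graph $G_2$ is empty, and $\Gamma(\mathbb{Z}_n)\cong K_n$ by Corollary \ref{prime}, so the substance of the lemma lies entirely in the composite case.
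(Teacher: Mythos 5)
Your argument follows essentially the same route as the paper's: read the factorization $\mu(\Gamma(\mathbb{Z}_n),x)=x(x-n)^{\phi(n)}\mu(G_2,x-\phi(n))$ from Theorem \ref{Th1}, observe that $0\in\sigma(G_2)$, and conclude that $x-\phi(n)$ divides the characteristic polynomial, with $0<\phi(n)<n$ guaranteeing that this root is not absorbed by the factors $x$ or $(x-n)^{\phi(n)}$. The one point where you are more careful than the paper is precisely the point that matters: the paper asserts unconditionally that $L(G_2)$ has $0$ as an eigenvalue, but when $n=p$ is prime, $\mathcal{T}\setminus\{0\}=\emptyset$, the graph $G_2$ has no vertices, and $\mu(\Gamma(\mathbb{Z}_p),x)=x(x-p)^{p-1}$; hence $\phi(p)=p-1$ is not an eigenvalue at all (for instance $\Gamma(\mathbb{Z}_3)\cong K_3$ has Laplacian spectrum $\{3,3,0\}$, which does not contain $\phi(3)=2$). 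So the lemma as stated is actually false for prime $n>2$, and your closing remark that ``the substance of the lemma lies entirely in the composite case'' should be sharpened into an explicit statement that the conclusion fails in the prime case, rather than leaving the impression that it is merely degenerate but still true; the same caveat propagates to Theorem \ref{ACjoin}. For composite $n$ your proof is complete and correct, including the verification that $G_2$ is nonempty, which the paper omits.
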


\begin{proof}
	
Since $0$ is always an eigenvalue of the Laplacian matrix of a given graph $G$, so the Laplacian matrix of the graph $G_2$ also has $0$ as an eigenvalue.
Using Equation \ref{main}, $x-\phi(n)$ is a factor of $\mu(G_2,x-\phi(n))$ which in turn implies
\begin{align}
\begin{split}\label{g}
\mu(\Gamma(\mathbb{Z}_n),x)=x(x-n)^{\phi(n)}\mu(G_2,x-\phi(n))
 \\
=x(x-n)^{\phi(n)}(x-\phi(n)) g(x-\phi(n)) \text{ where }
\end{split}
\end{align}
$g(x)$ is a polynomial of degree $n-\phi(n)-2.$
Hence $\phi(n)$ is an eigenvalue of $\Gamma(\mathbb Z_n)$ with multiplicity atleast $1$.	
\end{proof}

\begin{theorem}\label{ACjoin}
	$\lambda_{n-1}(\Gamma(\mathbb Z_n))=\phi(n).$
\end{theorem}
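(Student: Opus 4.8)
The plan is to read off the entire Laplacian spectrum of $\Gamma(\mathbb{Z}_n)$ from the factorization of its characteristic polynomial established in Theorem \ref{Th1}, and then simply pick out the second smallest root. So first I would recall Equation \ref{main}, namely $\mu(\Gamma(\mathbb{Z}_n),x) = x\,(x-n)^{\phi(n)}\,\mu(G_2,x-\phi(n))$. Since $\mu(G_2,x)$ is the characteristic polynomial of the Laplacian matrix $L(G_2)$, its roots are precisely the Laplacian eigenvalues of $G_2$ with multiplicity; hence the roots of $\mu(G_2,x-\phi(n))$ are the numbers $\phi(n)+\mu$ as $\mu$ runs over $\sigma(G_2)$. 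Thus the multiset of eigenvalues of $\Gamma(\mathbb{Z}_n)$ is $\{0\}\,\cup\,\{\,n \text{ with multiplicity } \phi(n)\,\}\,\cup\,\{\,\phi(n)+\mu : \mu\in\sigma(G_2)\,\}$.

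Next I would bound these eigenvalues from below. Because $L(G_2)$ is positive semidefinite, every $\mu\in\sigma(G_2)$ satisfies $\mu\ge 0$, so $\phi(n)+\mu\ge\phi(n)$; and $n>\phi(n)$ since $\phi(n)\le n-1$. Hence every eigenvalue of $\Gamma(\mathbb{Z}_n)$ apart from the single $0$ coming from the factor $x$ is at least $\phi(n)>0$. On the other hand $0\in\sigma(G_2)$ (via the all-ones eigenvector), so the value $\phi(n)=\phi(n)+0$ is actually attained; this is exactly Lemma \ref{AC}. Finally, since $1$ is a unit, Lemma \ref{Lemma1} supplies a universal vertex, so $\Gamma(\mathbb{Z}_n)$ is connected and $0$ has multiplicity exactly one. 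Therefore the second smallest eigenvalue equals $\min\!\big(\{n\}\cup\{\phi(n)+\mu:\mu\in\sigma(G_2)\}\big)=\phi(n)$, that is, $\lambda_{n-1}(\Gamma(\mathbb{Z}_n))=\phi(n)$.

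I do not expect a real obstacle here beyond bookkeeping: the statement drops out of the already-proved polynomial factorization together with the nonnegativity of Laplacian eigenvalues of $G_2$ and the elementary inequality $\phi(n)<n$. The only point worth stating carefully is that no eigenvalue smaller than $\phi(n)$ can be hiding among the roots of $\mu(G_2,x-\phi(n))$, which is precisely what positive semidefiniteness of $L(G_2)$ rules out; one could equally phrase the computation through the join $\Gamma(\mathbb{Z}_n)\cong K_{\phi(n)}\lor(G_2\cup\Theta_1)$ and Theorem \ref{Thjoin}, but citing Equation \ref{main} directly is cleanest.
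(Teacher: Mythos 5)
Your proposal is correct and follows essentially the same route as the paper: both read the spectrum off the factorization $\mu(\Gamma(\mathbb{Z}_n),x)=x(x-n)^{\phi(n)}\mu(G_2,x-\phi(n))$ from Theorem \ref{Th1}, use $0\in\sigma(G_2)$ to see that $\phi(n)$ is attained (Lemma \ref{AC}), and use the nonnegativity of the Laplacian eigenvalues of $G_2$ together with $0<\phi(n)<n$ to conclude that $\phi(n)$ is the second smallest root. Your explicit remark that $0$ is a simple eigenvalue (via the universal vertex) is a point the paper leaves implicit, but it does not change the argument.
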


\begin{proof}
	Using Lemma \ref{AC}, $\phi(n)$ is an eigenvalue of $\Gamma(\mathbb Z_n)$.
	Since the smallest root of the  polynomial $g(x-\phi(n))$ in Equation \ref{g} is $\phi(n)$  and $0<\phi(n)<n$, we conclude that the second smallest root of $\mu(\Gamma(\mathbb{Z}_n),x)$ is $\phi(n)$ which implies that $\lambda_{n-1}(\Gamma(\mathbb Z_n))=\phi(n)$.
	
\end{proof}

\begin{theorem}\label{VC=AC}
	For all $n> 2$, $\kappa(\Gamma(\mathbb Z_n))=\lambda_{n-1}(\Gamma(\mathbb Z_n))=\phi(n)$.
\end{theorem}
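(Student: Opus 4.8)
By Theorem~\ref{ACjoin} we already know that $\lambda_{n-1}(\Gamma(\mathbb{Z}_n)) = \phi(n)$, so the statement reduces to proving $\kappa(\Gamma(\mathbb{Z}_n)) = \phi(n)$; once this is established the chain $\kappa = \phi(n) = \lambda_{n-1}$ is immediate. The plan is therefore to sandwich $\kappa(\Gamma(\mathbb{Z}_n))$ between $\phi(n)$ and $\phi(n)$, using the decomposition $\Gamma(\mathbb{Z}_n) \cong K_{\phi(n)} \lor (G_2 \cup \Theta_1)$ of Equation~\ref{join} together with Lemmas~\ref{Lemma1} and~\ref{Lemma2}.

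For the upper bound I would take the set $\mathcal{S}$ of units as an explicit separating set. Deleting $\mathcal{S}$ from $\Gamma(\mathbb{Z}_n)$ leaves exactly the graph $G_2 \cup \Theta_1$; by Lemma~\ref{Lemma2} the vertex $0$ is isolated in it, while $V(G_2) = \mathcal{T}\setminus\{0\}$ is non-empty because $n$ is not prime. Hence $\Gamma(\mathbb{Z}_n) - \mathcal{S}$ is disconnected and $\kappa(\Gamma(\mathbb{Z}_n)) \le |\mathcal{S}| = \phi(n)$.

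For the lower bound the key observation is that, by Lemma~\ref{Lemma1}, every unit is a universal vertex of $\Gamma(\mathbb{Z}_n)$. Thus if $S$ is a separating set that omits some unit $u$, then $u$ survives in $\Gamma(\mathbb{Z}_n) - S$ and is adjacent to every other surviving vertex, so $\Gamma(\mathbb{Z}_n) - S$ is connected, a contradiction. Hence every separating set contains all of $\mathcal{S}$, which gives $\kappa(\Gamma(\mathbb{Z}_n)) \ge |\mathcal{S}| = \phi(n)$. (Alternatively, one could invoke the classical inequality $\lambda_{n-1}(G)\le\kappa(G)$ together with Theorem~\ref{ACjoin}.) Combining the two bounds with Theorem~\ref{ACjoin} finishes the proof.

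I do not expect a genuine obstacle here: the argument is essentially bookkeeping around the structural decomposition and the degree information of Section~\ref{Sec3}. If a more structural proof is preferred, one can instead verify the hypotheses of Theorem~\ref{VCequalsAC} with $\mathcal{G}_1 = G_2 \cup \Theta_1$ (disconnected, on $n - \phi(n)$ vertices) and $\mathcal{G}_2 = K_{\phi(n)}$ (on $\phi(n)$ vertices), noting that $\lambda_{n-1}(K_{\phi(n)}) = \phi(n) \ge 2\phi(n) - n$ since $\phi(n)\le n$; this again yields $\kappa = \lambda_{n-1}$, but it presupposes $\kappa(\Gamma(\mathbb{Z}_n)) = \phi(n)$, which is why I would establish that value first. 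The one point that needs care is the standing assumption that $n$ is composite, so that $G_2$ is non-empty and $\Gamma(\mathbb{Z}_n)$ is non-complete; the prime case, where $\Gamma(\mathbb{Z}_n)\cong K_n$, should be treated on its own.
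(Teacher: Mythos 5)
Your proposal is correct, and it takes a genuinely different route from the paper. The paper's proof is exactly the alternative you sketch at the end: it writes $\Gamma(\mathbb Z_n)=(G_2\cup\Theta_1)\lor K_{\phi(n)}$ and checks the hypotheses of Theorem~\ref{VCequalsAC} with $\mathcal{G}_1=G_2\cup\Theta_1$ and $\mathcal{G}_2=K_{\phi(n)}$ --- and, as you anticipate, it does so by saying ``if we assume $\kappa(\Gamma(\mathbb Z_n))=\phi(n)$, then all the conditions \dots are satisfied,'' without ever discharging that assumption. Your two-sided estimate supplies precisely the missing ingredient: $\mathcal{S}$ is an explicit separating set of size $\phi(n)$ (since $0$ is isolated in $G_2\cup\Theta_1$ and $G_2$ is nonempty for composite $n$), and since every unit is a universal vertex by Lemma~\ref{Lemma1}, any separating set must contain all of $\mathcal{S}$, giving $\kappa\ge\phi(n)$; Fiedler's inequality $\lambda_{n-1}\le\kappa$ together with Theorem~\ref{ACjoin} would serve equally well for the lower bound. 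What the paper's route buys is a structural explanation of why $\kappa=\lambda_{n-1}$ via the join decomposition; what yours buys is an actual determination of $\kappa$, without which the paper's argument is circular. Your caveat about the prime case is also substantive rather than cosmetic: for $n=p$ one has $\Gamma(\mathbb Z_p)\cong K_p$, Theorem~\ref{VCequalsAC} does not apply to complete graphs, and by Corollary~\ref{prime} the second smallest eigenvalue is $p$ while $\kappa(K_p)=p-1=\phi(p)$, so the stated chain of equalities genuinely requires $n$ composite.
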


\begin{proof}
	Using Equation \ref{join} we find that $\Gamma(\mathbb Z_n)=(G_2\cup \Theta_1)\lor K_{\phi(n)}$.
	If we take $\mathcal{G}_1= G_2\cup \Theta_1$ and $\mathcal{G}_2=K_{\phi(n)}$, we find that $\mathcal{G}_1$ is a disconnected graph on $n-\phi(n)$ vertices and $\mathcal{G}_2$ is a graph on $\phi(n)$ vertices.
	Clearly $\lambda_{n-1}(\mathcal{G}_2)=\lambda_{n-1}(K_{\phi(n)})=\phi(n)$.
	We find that if we assume  $\kappa(\Gamma(\mathbb Z_n))=\phi(n)$, then all the conditions of Theorem \ref{VCequalsAC} along with the  inequality $\lambda_{n-1}(\mathcal{G}_2)\geq 2\kappa(G)-n$ is satisfied.
	Hence we conclude that $\kappa(\Gamma(\mathbb Z_n))=\lambda_{n-1}(\Gamma(\mathbb Z_n))=\phi(n)$.
\end{proof}

\section{Largest \& Second Largest Eigenvalue of $\Gamma(\mathbb Z_n)$}
\label{Sec5}

In  this section we discuss about the second largest eigenvalue $\lambda_{2}$ of $\Gamma(\mathbb Z_n)$ which in turn helps us to find certain information about the largest eigenvalue $\lambda_1$ of  $\Gamma(\mathbb Z_n)$.
\\
We first study the connectivity of $G_2$.

\begin{theorem}\label{connected}
	The graph $G_2$ is connected if and only if $n$ is a product of distinct primes.
\end{theorem}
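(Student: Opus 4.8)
The plan is to transfer the question to the ``blow‑up'' structure of $G_2$: by Theorem \ref{Equitable Partition} we have the equitable partition $V(G_2)=\bigcup_{i=1}^{w}A_{d_i}$ over the proper divisors $d_1<\cdots<d_w$ of $n$, and by Lemma \ref{Adjacency} two vertices in $A_{d_i}$ and $A_{d_j}$ with $i\ne j$ are adjacent exactly when $\gcd(d_i,d_j)=1$, while each $A_{d_i}$ is independent (Lemma \ref{Lem2}). Thus a vertex of $A_d$ has a neighbour in $G_2$ iff $d$ is coprime to some proper divisor of $n$, and connectivity of $G_2$ mirrors connectivity of the divisor graph $H$ of Subsection \ref{Any}. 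I would isolate the degenerate inputs first ($G_2$ empty when $n$ is prime, and $G_2$ a single vertex when $n=4$), so that the main argument runs for all remaining $n$.

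For the ``if'' direction, suppose $n=p_1p_2\cdots p_k$ with $k\ge 2$ distinct primes. Every proper divisor $d$ of $n$ is a product of $r$ of these primes with $1\le r\le k-1$, so some prime $p_i\nmid d$; since $p_i$ is itself a proper divisor of $n$ and $\gcd(d,p_i)=1$, Lemma \ref{Adjacency} gives that every vertex of $A_d$ is joined to every vertex of $A_{p_i}$ (and $A_{p_i}$ is nonempty as $|A_{p_i}|=\phi(n/p_i)\ge 1$). Given $x\in A_{d}$ and $y\in A_{d'}$, pick such primes $p_i,p_j$: if $p_i=p_j$ then $x$ and $y$ have a common neighbour in $A_{p_i}$, and if $p_i\ne p_j$ then any vertex of $A_{p_i}$ is adjacent to any vertex of $A_{p_j}$ (they are coprime), so $x$ and $y$ are joined by a path of length at most $3$. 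Hence $G_2$ is connected.

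For the ``only if'' direction I would argue the contrapositive: if $n$ is not squarefree, say $p^2\mid n$, then $G_2$ is disconnected. If $n=p^m$ with $m\ge 2$, Theorem \ref{pm} already gives $G_2=\Theta_{\,n-\phi(n)-1}$ with $n-\phi(n)-1=p^{m-1}-1$, which is $\ge 2$ except when $n=4$; so for $n=p^m>4$, $G_2$ is edgeless on at least two vertices and hence disconnected. If instead $n$ has at least two distinct prime divisors, consider $d^{\ast}=n/p$: it is a proper divisor of $n$ (it is $\ne 1$ since $n\ne p$, and $\ne n$), and since $p^2\mid n$ the integer $d^{\ast}$ still carries every prime factor of $n$. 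Therefore every divisor $d>1$ of $n$ shares a prime with $d^{\ast}$, so by Lemma \ref{Adjacency} together with Lemma \ref{Lem2} no vertex of $A_{d^{\ast}}$ has any neighbour in $G_2$; as $n$ has at least two distinct primes we have $w\ge 2$, so $V(G_2)\supsetneq A_{d^{\ast}}\ne\emptyset$, and $G_2$ is disconnected.

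The path‑chasing in the ``if'' direction and the divisor bookkeeping are routine; the single place that needs a genuine idea is the ``only if'' direction in the mixed case where $p^2\mid n$ but $n$ has another prime factor, and there the key observation is simply to take $d^{\ast}=n/p$ and note it inherits the full radical of $n$, so that $A_{n/p}$ is a nonempty block of isolated vertices. I would also state explicitly how the degenerate cases $n$ prime and $n=4$ are to be read, so that the equivalence is clean for all $n>2$ to which the main argument applies.
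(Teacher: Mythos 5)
Your proof is correct and follows essentially the same route as the paper's: reduce adjacency to the divisor classes via Lemma \ref{Adjacency}, build short paths through the prime classes $A_{p_i}$ for the squarefree direction, and exhibit an isolated nonempty class (your $A_{n/p}$, versus the paper's single vertex $p_1p_2\cdots p_m$) when some $p^2\mid n$. Your version is in fact slightly more careful at two points: allowing $p_i=p_j$ repairs the paper's Case~2 (which as written asks for two distinct primes $p_1\neq p_2$ coprime to $d_i$ and $d_j$ respectively, impossible when $n=pq$ and both vertices lie in the same class $A_p$, though a length-$2$ path saves the conclusion), and you rightly flag that for $n=4$ the graph $G_2$ is a single vertex --- hence connected --- so the ``only if'' direction needs that degenerate case excluded or reinterpreted, an issue the paper does not address.
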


\begin{proof}
	
Let   $n=p_1^{\alpha_1}p_2^{\alpha_2}\cdots p_m^{\alpha_m}$ where $p_i$ are distinct primes and
 $\alpha_i$ are positive integers, $1\leq i\leq m$.
\\
We first assume that $G_2$ is connected.
In order to show that $n$ is a product of distinct primes, we  prove that $\alpha_i=1$ for all $1\le i\le m$.
Assume the contrary that $\alpha_i>1$ for atleast one $i$. 
Without loss of generality we take  $\alpha_1>1$.
We consider the vertex $a=p_1p_2p_3\cdots p_m$ of $G_2$.
Clearly $a\neq 0$ as $\alpha_1>1$.
Consider any other vertex of $G_2$ say $w$.
Since $V(G_2)=\cup_{i=1}^w A_{d_i}$ where $A_{d_i}$ has been defined in Equation \ref{divisor},  $w\in A_{d_i}$ for some positive proper divisor $d_i$ of $n$.
Thus $\gcd(w,n)=d_i$.
Also $a\in A_{p_1p_2p_3\cdots p_m}$.
Since $\gcd(d_i,p_1p_2p_3\cdots p_m)\neq 1$, using Lemma \ref{Adjacency} we conclude that $w$ is not adjacent to $a$.
Since $w$ is arbitrary, we find that the vertex $a\in G_2$ is not adjacent to any other vertex of $G_2$ which contradicts the fact that $G_2$ is connected.
Hence our assumption that $\alpha_1>1$ is false.
Thus $\alpha_i=1$ for all $1\le i\le m$ which proves that $n$ is a product of distinct primes.
\\
\\
Conversely we assume that $n$ is a product of distinct primes.
In order to show that $G_2$ is connected we choose two arbitrary distinct vertices $x_i,x_j\in G_2$.
Then $x_i\in A_{d_i}$ and $x_j\in A_{d_j}$ for some proper positive divisor $d_i,d_j$ of $n$.
We consider the following two cases which may arise.
\begin{enumerate}
\item [1.] $\gcd(d_i,d_j)=1$
		Using Lemma \ref{Adjacency}, $x_i$ and $x_j$ are adjacent in $G_2$.
		\item [2.]  $\gcd(d_i,d_j)\neq 1$
		
	Since $\gcd(d_i,d_j)\neq 1$, $d_i,d_j$ have a prime factor in common.
	Since $n$ is a product of distinct primes, so there exists a prime factor $p_1$ of $n$ such that $\gcd(d_i,p_1)=1$.
	Also it is possible to choose another prime factor $p_2\neq p_1$ of $n$ such that $\gcd(d_j,p_2)=1$.
	Since $\gcd(p_1,p_2)=1$, if we choose $x_{p_1}\in A_{p_1}$ and $x_{p_2}\in A_{p_2}$ then $x_{p_1}$ is adjacent to $x_{p_2}$.
	Thus using Lemma \ref{Adjacency} we obtain a path of length  $3$ from $x_i$ to $x_j$ given by 
	\begin{align*}
	x_i\rightarrow x_{p_1}\rightarrow x_{p_2}\rightarrow x_j
	\end{align*}
	\end{enumerate}	
Combining cases $1$ and $2$ we find that any two vertices of $G_2$ are either adjacent or there exists a path between them which implies that $G_2$ is connected when $n$ is a product of distinct primes.	
\\
Thus $G_2$ is connected if and only if $n$ is a product of distinct primes.

\end{proof}

Now we investigate the connectivity of the complement of the graph $G_2$ (denoted by $G_2^c$) when $n$ is a product of distinct primes.
\\
\\
When $n$ is a product of two distinct primes, i.e. $n=pq$, then $n$ has only two distinct proper positive divisors namely $p$ and $q$.
Thus $V(G_2)=A_p\cup A_q$.
Since $\gcd(p,q)=1$, using Lemma \ref{Adjacency}, $G_2$ becomes
\begin{figure}[H]
	\centering
	\begin{tikzpicture}[circ/.style={draw, shape=circle,minimum size=3em,
		path picture={\fill (20:0.5em) circle[radius=0.75pt]
			(140:0.5em) circle[radius=0.75pt]  (260:0.5em) circle[radius=0.75pt];
	}}]
	\node[circ,label=below:$A_p$] (v0) at (0,0) {};
	\node[circ,label=below:$A_q$] (v1) at (0:4) {};
	\draw (v0) -- (v1) ;
	\end{tikzpicture}

	\caption{ $G_2$ when $n=pq$}
	\label{Pic}
\end{figure}
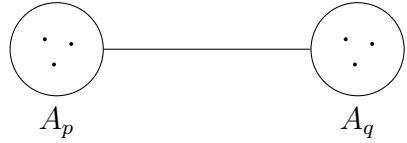

(Here the solid line indicates that  each vertex of $A_p$ is adjacent to each vertex of $A_q$).
\\
Clearly $G_2^c$ is disconnected when $n=pq$.
\\
In the next theorem, we investigate the connectivity of $G_2^c$ when $n$ is a product of more than $2$ distinct primes.

 \begin{theorem}\label{complement}
	If $n$ is a product of more than two distinct primes, then  $G_2^c$ is connected.
\end{theorem}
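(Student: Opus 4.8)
The plan is to show that any two distinct vertices of $G_2^c$ are joined by a path of length at most $2$, which immediately gives connectivity. Write $n=p_1p_2\cdots p_m$ with $m\ge 3$ distinct primes. The key translation is this: by Lemma \ref{Adjacency}, for proper divisors $d\ne e$ of $n$ a vertex of $A_d$ is adjacent in $G_2$ to a vertex of $A_e$ precisely when $\gcd(d,e)=1$, so in the complement $G_2^c$ a vertex of $A_d$ is adjacent to a vertex of $A_e$ precisely when $\gcd(d,e)\ne 1$; and by Lemma \ref{Lem2} the vertices inside a single $A_d$, being pairwise non-adjacent in $G_2$, form a clique in $G_2^c$. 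Finally, each $A_{d}$ is non-empty because $|A_d|=\phi(n/d)\ge 1$ (see Equation \ref{divisor}).

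Next I would fix two distinct vertices $x\in A_d$, $y\in A_e$ with $d,e$ proper divisors of $n$, and split into cases. If $d=e$, then $x$ and $y$ lie in the same clique of $G_2^c$. If $d\ne e$ but $\gcd(d,e)\ne 1$, then $x$ and $y$ are already adjacent in $G_2^c$. The remaining case is $d\ne e$ with $\gcd(d,e)=1$: choose a prime $p\mid d$ and a prime $q\mid e$; then $p\ne q$, and since $m\ge 3$ the integer $pq$ is a \emph{proper} divisor of $n$. Moreover $pq\notin\{d,e\}$, because $q\nmid d$ and $p\nmid e$. Pick any $z\in A_{pq}$ (possible as $A_{pq}\ne\emptyset$). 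Since $p$ divides both $d$ and $pq$ we get $\gcd(d,pq)>1$, and since $q$ divides both $e$ and $pq$ we get $\gcd(e,pq)>1$; hence by the reformulated criterion $z$ is adjacent in $G_2^c$ to both $x$ and $y$, giving a path $x,z,y$. In all cases $x$ and $y$ lie in the same component, so $G_2^c$ is connected.

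The only place the hypothesis $m\ge 3$ is genuinely used — and therefore the one point to be careful about — is the claim that the intermediate divisor $pq$ is \emph{proper}, so that $A_{pq}$ is an actual class of the partition $V(G_2)=\bigcup_{i=1}^w A_{d_i}$ and is non-empty; for $m=2$ this fails ($pq=n$), which is consistent with the observation preceding the statement that $G_2^c$ is disconnected when $n=pq$. Everything else is routine case-checking. As an alternative packaging, one can invoke the $H$-join description following Theorem \ref{generalizedcomposition}: since each $G_{d_i}$ is a null graph, $G_2^c$ is the $H^c$-join of the complete graphs $K_{\phi(n/d_i)}$, so $G_2^c$ is connected if and only if $H^c$ is connected, and the path $d,pq,e$ constructed above is exactly a connecting path in $H^c$; this reduces Theorem \ref{complement} to the statement that, for $m\ge 3$, the graph on the proper divisors of $n$ with $d\sim e \iff \gcd(d,e)\ne 1$ is connected.
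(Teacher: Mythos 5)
Your proof is correct and follows essentially the same route as the paper's: split on whether $\gcd(d,e)=1$, and in the coprime case route through a vertex of $A_{pq}$, which is a proper divisor class precisely because $n$ has more than two prime factors. Your additional care (handling $d=e$ via Lemma \ref{Lem2}, checking $pq\notin\{d,e\}$ and $A_{pq}\neq\emptyset$) tightens details the paper glosses over, but the argument is the same.
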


\begin{proof}

Let $n=p_1p_2p_3\cdots p_m$ where $p_i$ are distinct primes and $m>2$.
Let $x_i,x_j$ be two distinct vertices  of $G_2^c$.
Then $x_i\in A_{d_i}$ and $x_j\in A_{d_j}$ where $d_i,d_j$ are positive proper divisors of $n$.
 We consider the following two cases:
\begin{enumerate}
	\item [1.] $\gcd(d_i,d_j)\neq 1$

Using Lemma \ref{Adjacency}, $x_i\in A_{d_i}$ is not adjacent to $x_j\in A_{d_j}$ in $G_2$	which implies that $x_i\in A_{d_i}$ is adjacent to $x_j\in A_{d_j}$ in $G_2^c$.
	
	\item [2.] $\gcd(d_i,d_j)= 1$
	
	Using Lemma \ref{Adjacency}, $x_i\in A_{d_i}$ is not adjacent to $x_j\in A_{d_j}$ in $G_2$. 
	Let $p_1$ be a prime factor of $d_i$ and $p_2$ be a prime factor of $d_j$.	
	Since $n$ is a product of more than two distinct primes, so $p_1p_2$ is  a  positive proper divisor of $n$.
	Hence using Lemma \ref{Adjacency}, there exists $y\in A_{p_1p_2}$ such that $x_i,x_j$ are not adjacent to $y$ in $G_2$.	Thus $y$ is adjacent to both $x_i$ and $x_j$ in $G_2^c$ and hence there exists a path of length $2$ given by $x_i\rightarrow y\rightarrow x_j$ from $x_i$ to $x_j$ in $G_2^c$.

\end{enumerate}
Combining cases $1$ and $2$ we find that any two vertices of $G_2^c$ are either adjacent or there exists a path between them which implies that $G_2^c$ is connected when $n$ is a product of more than two distinct primes.	

\end{proof}
	
\begin{theorem}\label{Th7}

	$\lambda_2(\Gamma(\mathbb Z_n))\leq n-1$ where equality holds if and only if  $n$ is a product of two distinct primes.

\end{theorem}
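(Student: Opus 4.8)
The plan is to read the Laplacian spectrum of $\Gamma(\mathbb{Z}_n)$ off the factorization in Theorem \ref{Th1}, reduce the claim to the largest Laplacian eigenvalue $\lambda_1(G_2)$ of the subgraph $G_2$, and then feed in the connectivity dichotomy of Theorems \ref{connected} and \ref{complement}. From $\mu(\Gamma(\mathbb{Z}_n),x)=x(x-n)^{\phi(n)}\mu(G_2,x-\phi(n))$ one sees that the eigenvalues of $\Gamma(\mathbb{Z}_n)$ are $0$, the value $n$ with multiplicity $\phi(n)$, and the numbers $\phi(n)+\nu$ for $\nu\in\sigma(G_2)$. Since $G_2$ has $n-\phi(n)-1$ vertices, every Laplacian eigenvalue $\nu$ of $G_2$ satisfies $\nu\le n-\phi(n)-1$, so $\phi(n)+\nu\le n-1<n$; hence $\lambda_1(\Gamma(\mathbb{Z}_n))=n$, and the eigenvalue to be bounded (namely the largest one strictly below the spectral radius $n$, which is what $\lambda_2$ must mean here, as $n$ has multiplicity $\phi(n)\ge 2$) equals $\phi(n)+\lambda_1(G_2)$ when $G_2$ is nonempty and equals $0$ when $n$ is prime. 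In either case $\lambda_2(\Gamma(\mathbb{Z}_n))=\phi(n)+\lambda_1(G_2)\le \phi(n)+(n-\phi(n)-1)=n-1$, which is the asserted upper bound.

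For the equality case I would apply Theorem \ref{Th3} to $G_2$: the bound $\lambda_1(G_2)\le |V(G_2)|=n-\phi(n)-1$ is attained exactly when $G_2^{c}$ is disconnected, so $\lambda_2(\Gamma(\mathbb{Z}_n))=n-1$ if and only if $G_2^{c}$ is disconnected. It then remains to decide when this happens. If $n=pq$ is a product of two distinct primes $p<q$, then by Lemma \ref{Adjacency} (cf.\ Figure \ref{Pic}) $G_2\cong K_{q-1,p-1}$, whose complement $K_{q-1}\cup K_{p-1}$ is disconnected, so equality holds. If $n$ is a product of more than two distinct primes, Theorem \ref{complement} gives that $G_2^{c}$ is connected, so equality fails. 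If $n$ is not squarefree, Theorem \ref{connected} says $G_2$ is disconnected, hence $G_2^{c}$ is connected whenever $|V(G_2)|\ge 2$ (a graph on at least two vertices and its complement cannot both be disconnected), while if $|V(G_2)|\le 1$ then $\lambda_1(G_2)=0<n-\phi(n)-1$ already and equality fails; the remaining case $n$ prime is covered by the first paragraph. Combining these, $\lambda_2(\Gamma(\mathbb{Z}_n))=n-1$ precisely when $n$ is a product of two distinct primes.

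The proof is largely bookkeeping, and the one place needing care is the reduction $\lambda_2=\phi(n)+\lambda_1(G_2)$: one must verify that no eigenvalue $\phi(n)+\nu$ can reach $n$ (it cannot, since $\nu<n-\phi(n)$ as $G_2$ has only $n-\phi(n)-1$ vertices) and that $\phi(n)+\lambda_1(G_2)$ dominates the remaining eigenvalue $0$, together with a clean handling of the degenerate cases $n$ prime, $n=p^{m}$, and $|V(G_2)|\le 1$. The substantive input — exactly when $G_2^{c}$ is disconnected — is already supplied by Theorems \ref{connected} and \ref{complement}, so no new spectral estimate is required.
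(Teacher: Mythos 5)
Your proposal is correct and follows essentially the same route as the paper: reduce via Theorem \ref{Th1} to $\lambda_2=\phi(n)+\lambda_1(G_2)$, bound $\lambda_1(G_2)$ by $|V(G_2)|=n-\phi(n)-1$ using Theorem \ref{Th3}, and settle equality through the connectivity of $G_2^{c}$ via Theorems \ref{connected} and \ref{complement}. Your treatment is in fact somewhat more careful than the paper's, since you explicitly dispose of the non-squarefree and degenerate cases and note that $\lambda_2$ must be read as the largest eigenvalue below the spectral radius $n$.
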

	
\begin{proof}
	Let $\lambda_{1}(G_2)$ denote the largest eigenvalue of the Laplacian matrix of $G_2$.	
	Using Equation \ref{main} of theorem \ref{Th1} it is evident that the second largest eigenvalue of $\Gamma(\mathbb Z_n)$  is the largest eigenvalue of the Laplacian matrix of $G_2$ which implies 
	\begin{align*}
     \lambda_2(\Gamma(\mathbb Z_n))=\lambda_{1}(G_2)+\phi(n).
	\end{align*}
	Since $G_2$ is a graph on $n-\phi(n)-1$ vertices, using Theorem \ref{Th3} we have
	$\lambda_{1}(G_2)\le n-\phi(n)-1$ where equality holds if and only if $G$ is connected and $G_2^c$ is disconnected.
	\\
	\\
	Using Theorems \ref{connected} and \ref{complement} we find that $G_2$ is connected if and only if $n$ is a product of distinct primes and $G_2^c$ is disconnected if $n$ is a product of two primes.
	Thus
	\begin{align*}
	\lambda_2(\Gamma(\mathbb Z_n))=\lambda_{1}(G_2)+\phi(n)\leq (n-\phi(n)-1)+\phi(n)=n-1
	\end{align*}
	
	where equality holds if and only if $n$ is a product of two primes.
	
\end{proof}

	\begin{theorem}\label{mult spectral}
 For any $n> 2$,  $\lambda_1(\Gamma(\mathbb Z_n))=n$ has multiplicity exactly $\phi(n)$.
	\end{theorem}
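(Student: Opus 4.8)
The plan is to read off the multiplicity of $n$ directly from the factorisation in Equation \ref{main}, and to use only the coarse bound of Theorem \ref{Th3} to rule out any hidden contribution from $G_2$. First I would confirm that $n$ really is the spectral radius. By Equation \ref{join}, $\Gamma(\mathbb Z_n)=K_{\phi(n)}\lor(G_2\cup\Theta_1)$ is a nontrivial join (note $\phi(n)\ge 1$ since $n>2$), so its complement is the disjoint union of two nonempty graphs and hence disconnected; Theorem \ref{Th3} then forces $\lambda_1(\Gamma(\mathbb Z_n))=n$. Together with the corollary following Theorem \ref{Th1}, this shows $n$ is the spectral radius with multiplicity at least $\phi(n)$, so it remains only to show the multiplicity is not larger.

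Next I would pin down the exact multiplicity from $\mu(\Gamma(\mathbb Z_n),x)=x(x-n)^{\phi(n)}\mu(G_2,x-\phi(n))$. The linear factor $x$ contributes the root $0\ne n$, while the roots of $\mu(G_2,x-\phi(n))$ are exactly the numbers $\lambda+\phi(n)$ with $\lambda$ ranging over $\sigma(G_2)$, counted with multiplicity. Hence the multiplicity of $n$ as an eigenvalue of $\Gamma(\mathbb Z_n)$ equals $\phi(n)$ plus the multiplicity of $n-\phi(n)$ as an eigenvalue of $L(G_2)$.

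The remaining point — and the step that does all the work — is to check that $n-\phi(n)$ is not an eigenvalue of $L(G_2)$. When $n$ is prime, $G_2$ is empty and there is nothing to check (this case is already Corollary \ref{prime}); otherwise $G_2$ is a graph on $n-\phi(n)-1$ vertices, so Theorem \ref{Th3} gives $\lambda_1(G_2)\le n-\phi(n)-1<n-\phi(n)$, whence $n-\phi(n)\notin\sigma(G_2)$ and the extra multiplicity vanishes. I do not expect a genuine obstacle here; the only subtlety is that the vertex count of $G_2$ is exactly one less than $n-\phi(n)$, so the trivial bound of Theorem \ref{Th3} is just strong enough to close the gap and yield multiplicity exactly $\phi(n)$.
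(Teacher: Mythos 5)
Your proposal is correct and follows essentially the same route as the paper: the paper cites Theorem \ref{Th7} ($\lambda_2(\Gamma(\mathbb Z_n))\le n-1$), whose proof is precisely your key step of applying Theorem \ref{Th3} to $G_2$ to get $\lambda_1(G_2)\le n-\phi(n)-1<n-\phi(n)$, so that the factor $\mu(G_2,x-\phi(n))$ in Equation \ref{main} contributes no further roots equal to $n$. You simply inline that bound (and handle the prime case separately), which is a fine, slightly more self-contained presentation of the same argument.
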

	\begin{proof}
		Using Theorem \ref{Th7},  $\lambda_2(\Gamma(\mathbb Z_n))\le n-1$.  Thus from Equation \ref{main}
		of Theorem \ref{Th1}, we conclude that  $\lambda_1=n$ has multiplicity exactly $\phi(n)$.
	\end{proof}
	
\begin{theorem}\label{multAC}
	If $n=\prod_{i=1}^{m}p_i^{\alpha_i}$ where $p_i$ are distinct  primes and $\alpha_i$ are positive integers, then  $\phi(n)$ is an eigenvalue of $\Gamma(\mathbb{Z}_n)$ with multiplicity $\dfrac{n}{\prod_{i=1}^m p_i}$.
\end{theorem}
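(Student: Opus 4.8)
The plan is to extract the multiplicity of $\phi(n)$ from the factorization in Theorem~\ref{Th1} and then reduce everything to counting the connected components of $G_2$.

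First, Theorem~\ref{Th1} gives $\mu(\Gamma(\mathbb{Z}_n),x)=x(x-n)^{\phi(n)}\,\mu(G_2,x-\phi(n))$, so the spectrum of $\Gamma(\mathbb{Z}_n)$ consists of $0$, of $n$ with multiplicity $\phi(n)$, and of the numbers $\lambda+\phi(n)$ for $\lambda\in\sigma(G_2)$ (with multiplicities). Since $0<\phi(n)<n$ whenever $n>2$, the value $\phi(n)$ is produced neither by the factor $x$ nor by $(x-n)^{\phi(n)}$; hence the multiplicity of $\phi(n)$ as an eigenvalue of $\Gamma(\mathbb{Z}_n)$ equals the multiplicity of $0$ as an eigenvalue of $L(G_2)$, i.e. the number of connected components of $G_2$. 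Thus the theorem is equivalent to the assertion that $G_2$ has exactly $n/\prod_{i=1}^{m}p_i$ connected components. For $n=p^m$ the graph $G_2$ is edgeless, and one reads the multiplicity off Theorem~\ref{pm} (there it equals $\tfrac np-1$); I carry out the remaining argument for $m\ge 2$.

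The next step is to describe the isolated vertices of $G_2$. By Lemma~\ref{Adjacency}, adjacency of $x$ and $y$ in $G_2$ depends only on whether $\gcd(\gcd(x,n),\gcd(y,n))=1$, so $x$ is isolated in $G_2$ precisely when $\gcd(x,n)$ shares a prime factor with every proper divisor of $n$. As $m\ge 2$, each prime $p_i$ is itself a proper divisor of $n$, so this forces every $p_i$ to divide $\gcd(x,n)$, i.e. $\prod_{i=1}^{m}p_i\mid x$; conversely, if $\prod p_i\mid x$ then $\gcd(x,n)$ meets every proper divisor $>1$. The vertices of $G_2$ divisible by $\prod p_i$ are $\prod p_i,\ 2\prod p_i,\ \ldots,\ \bigl(n/\prod p_i-1\bigr)\prod p_i$, so $G_2$ has exactly $n/\prod p_i-1$ isolated vertices. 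Equivalently, summing $|A_d|=\phi(n/d)$ over the proper divisors $d$ of $n$ divisible by $\prod p_i$ and using $\sum_{e\mid M}\phi(e)=M$ with $M=n/\prod p_i$ gives the same count.

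Finally I would show that the remaining, non-isolated vertices of $G_2$ all lie in one component. Given two of them, $x$ and $x'$, the failure of $\prod p_i\mid x$ supplies a prime $p\mid n$ with $p\nmid x$, and similarly a prime $p'\mid n$ with $p'\nmid x'$; then $p$ and $p'$ are vertices of $G_2$ (being proper divisors, since $m\ge 2$), and by Lemma~\ref{Adjacency} we have $x\sim p$, $x'\sim p'$, and $p\sim p'$ whenever $p\ne p'$, which yields a path of length at most $3$ from $x$ to $x'$ — the same device used in the proof of Theorem~\ref{connected}. Hence the non-isolated vertices contribute a single component, and $G_2$ has $1+\bigl(n/\prod p_i-1\bigr)=n/\prod_{i=1}^{m}p_i$ components, which by the first step is the multiplicity of $\phi(n)$. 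I expect the bulk of the work to be in these last two steps, namely the exact identification of the isolated vertices of $G_2$ and the verification that what remains is connected rather than disconnected; the reduction through $\mu(\Gamma(\mathbb{Z}_n),x)$ and the Euler‑function identity are routine, and the only point demanding extra care is the degenerate prime‑power case flagged above.
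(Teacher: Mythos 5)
Your proposal is correct and follows essentially the same route as the paper: reduce the multiplicity of $\phi(n)$ to the number of connected components of $G_2$ via Theorem \ref{Th1}, identify the isolated vertices of $G_2$ as the nonzero multiples of $\prod_{i=1}^m p_i$, and count. You are, however, more careful than the paper in two places, and both points matter. First, the paper passes from ``$G_2$ has $\frac{n}{\prod p_i}-1$ isolated vertices'' directly to ``$G_2$ has $\frac{n}{\prod p_i}$ components'' without checking that the non-isolated vertices form a single component; your path argument through the vertices $p$ and $p'$ (valid since each $p_i$ is a proper divisor when $m\ge 2$) supplies the missing step. Second, your treatment of the prime-power case is not merely a degenerate check: for $n=p^{\alpha}$ with $\alpha\ge 2$ the graph $G_2$ is edgeless on $p^{\alpha-1}-1$ vertices, so the multiplicity of $\phi(n)$ is $\frac{n}{p}-1$, not $\frac{n}{p}$ as Theorem \ref{multAC} asserts (e.g.\ for $n=4$ the spectrum is $\{4,4,2,0\}$, so $\phi(4)=2$ has multiplicity $1$, not $2$; for $n=p$ prime, $\phi(n)$ is not an eigenvalue at all by Corollary \ref{prime}). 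The paper's ``$+1$ for the rest'' silently presumes $m\ge 2$, so the theorem as stated needs that hypothesis; your restriction to $m\ge 2$ is exactly the right fix, and you could state explicitly that the theorem fails for $m=1$ rather than leaving the discrepancy implicit.
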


\begin{proof}
	Let us first assume that  $n$ is a product of distinct primes i.e. $n=p_1p_2\cdots p_m$.
	Using  Lemma \ref{connected}, $G_2$ is connected and hence $0$ is an eigenvalue of $L(G_2)$ with multiplicity $1$ which in turn using Equation \ref{main} implies that $\phi(n)$ is an eigenvalue of $\Gamma(\mathbb{Z}_n)$ with multiplicity $1$.  Since $\dfrac{n}{\prod_{i=1}^m p_i}=1$, the theorem holds true.
	\\
	We now assume that  $n$ is not a product of distinct primes, i.e. $\alpha_i>1$ for atleast one $1\le i\le m$.
	The set of vertices of $G_2$ in $\langle p_1p_2\cdots p_m \rangle\setminus \{0\}$ are not adjacent to any other vertex in $G_2$.
	Since the set $\langle p_1p_2\cdots p_m \rangle\setminus \{0\}$ has $\dfrac{n}{\prod_{i=1}^m p_i}-1$ elements, the graph $G_2$ has $\dfrac{n}{\prod_{i=1}^m p_i}$ connected components.
	Hence $0$ is an eigenvalue of $L(G_2)$ with multiplicity $\dfrac{n}{\prod_{i=1}^m p_i}$ which in turn using Equation \ref{main} implies that $\phi(n)$ is an eigenvalue of $\Gamma(\mathbb{Z}_n)$ with multiplicity $\dfrac{n}{\prod_{i=1}^m p_i}$.
\end{proof}

\subsection{Vertex connectivity of $G_2$}\label{Sec50}
	
This section deals with  the vertex connectivity of $G_2$.
Since $G_2$ is connected if and only if $n$ is a product of distinct primes, we discuss $\kappa(G_2)$ when $n=p_1p_2\cdots p_m$ where $p_i$, $1\le i\le m$ are distinct primes.
\\
\\
We first give an  example to illustrate $\kappa(G_2)$.
\begin{example}
	Suppose $n=3\times 5\times 7$.
	Consider the vertex $15$ in $G_2$.
	Consider the set  $\langle 7\rangle \setminus\{0\}=\{7k: 1\leq k\leq 14\}.$
	We notice that $15$ is adjacent only to the following vertices $\{7,14,28,49,56,77,91,98\}$. 
	Thus the set  $\{7,14,28,49,56,77,91,98\}$ is a separating set of $G_2$.
	\\
	The elements of the set $\{7,14,28,49,56,77,91,98\}$ are of the form $\{7k: 1\leq k\leq 14, \gcd(k,14)=1\}.$
We find that $\kappa(G_2)\le 8=\phi(15).$
\end{example}

We prove the above formally in the following theorem:

\begin{theorem}
	If $n=p_1p_2\dots p_m$, then $\kappa(G_2)\leq \phi(p_1p_2p_3\dots p_{m-1})$.
\end{theorem}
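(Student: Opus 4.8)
The statement claims $\kappa(G_2) \le \phi(p_1p_2\cdots p_{m-1})$ when $n = p_1p_2\cdots p_m$ is squarefree. Following the example preceding the theorem, the natural strategy is to exhibit an explicit separating set of $G_2$ of size $\phi(p_1p_2\cdots p_{m-1})$. First I would pick a convenient vertex whose neighborhood is small. A good candidate is a vertex $a$ with $\gcd(a,n) = p_1p_2\cdots p_{m-1}$, i.e. $a \in A_{p_1\cdots p_{m-1}}$ (for instance $a = p_1p_2\cdots p_{m-1}$ itself, which is nonzero since $p_m \nmid a$). By Lemma~\ref{Adjacency}, $a$ is adjacent to $x \in A_{d}$ if and only if $\gcd(p_1\cdots p_{m-1}, d) = 1$; since $d$ is a proper divisor of the squarefree number $n$, the only way $d$ can be coprime to $p_1\cdots p_{m-1}$ is $d = p_m$. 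Hence the entire neighborhood of $a$ in $G_2$ is exactly $A_{p_m}$.

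Next I would check two things: that $A_{p_m}$ is a separating set, and that $|A_{p_m}| = \phi(p_1\cdots p_{m-1})$. For the size, recall $|A_{d_i}| = \phi(n/d_i)$ from the discussion after Equation~\ref{divisor}, so $|A_{p_m}| = \phi(n/p_m) = \phi(p_1p_2\cdots p_{m-1})$, which is the claimed bound. For the separating property: removing $A_{p_m}$ from $G_2$ disconnects $a$ from the rest of the graph, because every neighbor of $a$ lies in $A_{p_m}$. One still must verify that $G_2 \setminus A_{p_m}$ actually has more than one component — equivalently, that $V(G_2) \setminus (A_{p_m} \cup \{a\})$ is nonempty, which holds since $m > 2$ forces $n$ to have at least three proper divisors besides $p_m$ (e.g. $p_1$, $p_1 p_2$, etc.), so there is some vertex left over and it is not isolated-together-with-$a$. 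Thus $A_{p_m}$ is a genuine separating set and $\kappa(G_2) \le |A_{p_m}| = \phi(p_1p_2\cdots p_{m-1})$.

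**Main obstacle.** The only subtlety is the degenerate case $m = 2$, where $n = p_1 p_2$ and $G_2$ is just a complete bipartite-type graph $A_{p_1} \cup A_{p_2}$ with all cross edges present (Figure~\ref{Pic}); there the ``separating set'' $A_{p_2}$ leaves only $A_{p_1}$, which is itself connected-but-then-$a$-is-removed — one should double-check the indexing conventions so that the bound $\phi(p_1)$ is stated correctly, or simply note the theorem is intended for $m \ge 2$ and the argument goes through verbatim since removing $A_{p_2}$ isolates the vertex $a \in A_{p_1}$ from the remaining vertices of $A_{p_1}$ (which form a nonempty null set when $\phi(p_1) > 1$). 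Apart from bookkeeping on small cases, the proof is essentially immediate once the key observation $N_{G_2}(a) = A_{p_m}$ is in hand; that identification, via Lemma~\ref{Adjacency} and squarefreeness, is the heart of the argument.
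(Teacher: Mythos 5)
Your proposal is correct and takes essentially the same route as the paper: both isolate the vertex $a=p_1p_2\cdots p_{m-1}$, observe via Lemma \ref{Adjacency} that its entire neighbourhood is $A_{p_m}$ (the paper phrases this as the elements of $\langle p_m\rangle\setminus\{0\}$ coprime to $p_1\cdots p_{m-1}$, which is the same set), and count $|A_{p_m}|=\phi(n/p_m)=\phi(p_1\cdots p_{m-1})$. Your explicit check that $G_2\setminus A_{p_m}$ really has a second component, and your uniform treatment of the case $m=2$ (which the paper instead handles separately via the join structure of Figure \ref{Pic}), are minor tidy-ups rather than a different argument.
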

\begin{proof}
	
	We first verify the result when $n$ is a product of two distinct primes.
	The graph of $G_2$ when $n$ is a product of two distinct primes has been shown in Figure \ref{Pic}.
	If $n=p_1p_2$, then $G_2$ is the join of two disconnected graphs having vertex sets as $A_{p_1}$ and $A_{p_2}$  and hence $\kappa(G_2)=\min \{|V_1|,|V_2|\}=\min\{p_1-1,p_2-1\}=p_1-1=\phi(p_1)$ and hence our result holds.
	\\
	\\	
	When $n=\prod_{i=1}^m p_i$ where $m>2$, then the vertex $p_1p_2\cdots p_{m-1}$ of the graph $G_2$ is adjacent only to those members $\textbf{a}$ of the set $\langle p_m\rangle \setminus \{0\}$ such that $\gcd(\textbf{a},p_1p_2\cdots p_{m-1})=1$.
	The number of those elements $\textbf{a}$ such that  $\gcd(\textbf{a},p_1p_2\cdots p_{k-1})=1$ equals $\phi(p_1p_2\cdots p_{m-1})$.
	Since the vertices $\textbf{a}$  for which $\gcd(\textbf{a},p_1p_2\cdots p_{m-1})=1$ becomes a \textit{separating set} of the graph $G_2$, the result follows.
\end{proof}	
	
\section{ Problems}
\label{Sec6}
In this section we pose some problems for further research.
\\
Using Theorem \ref{integral}, we observe that  $\Gamma(\mathbb Z_n)$ is Laplacian Integral for  $p^\alpha q^\beta$ where $p,q$ are  primes and $\alpha,\beta$ are non-negative integers.
Since it is quite motivating to find those graphs which are \textit{Laplacian Integral}, we pose the following problem:

\begin{problem}
	Is it true that $\Gamma(\mathbb Z_n)$ is \textit{Laplacian Integral} if and only if $n=p^\alpha q^\beta$ where $p,q$ are  primes and $\alpha,\beta$ are non-negative integers? If not, then find all $n$ such that  $\Gamma(\mathbb Z_n)$ is \textit{Laplacian Integral}.
	
\end{problem}
\noindent
Again, in subsection \ref{Sec50} we have provided an upper bound on the vertex connectivity of the graph $G_2$ which is an induced subgraph of $\Gamma(\mathbb Z_n)$.
Though we have provided an upper bound on $\kappa(G_2)$,  the readers are  encouraged to calculate the exact value of  $\kappa(G_2)$ if possible.
Thus we ask the following:

\begin{problem}
	If $n=p_1p_2\cdots p_m$ where $p_1<p_2<\cdots< p_m$ are primes, find $\kappa(G_2)$.
\end{problem}


\end{document}